\documentclass[11pt,twoside,reqno]{amsart}
\usepackage[all]{xy} 
\usepackage{enumitem}  
\usepackage {amssymb,latexsym,amsthm,amsmath}
\usepackage[colorlinks, linkcolor=blue, citecolor=red]{hyperref}
\usepackage{graphicx}
\usepackage{mathtools}
\usepackage{xcolor}
\usepackage[utf8]{inputenc}
\usepackage{tikz,quiver}

\long\def\symbolfootnote[#1]#2{\begingroup%
\def\thefootnote{\fnsymbol{footnote}}\footnote[#1]{#2}\endgroup}

\newcommand{\C}{\mathbb C}

\newcommand{\Char}{\textup{char}}

\makeatletter
\def\imod#1{\allowbreak\mkern10mu({\operator@font mod}\,\,#1)}
\makeatother

\newtheorem{theorem}{Theorem}[section]
\newtheorem{lemma}[theorem]{Lemma}

\newtheorem{proposition}[theorem]{Proposition}
\newtheorem*{theorem*}{Theorem}

\theoremstyle{definition}
\newtheorem{remark}[theorem]{Remark}

\newtheorem{question}[theorem]{Question}
\newtheorem{example}[theorem]{Example}

\numberwithin{equation}{section}

\newcommand{\ignore}[1]{}

\newcommand{\mynote}[1]{}

\begin{document}

\setcounter{section}{0}
\setcounter{tocdepth}{1}
\title[Waring Problem for matrices over finite local rings]{Waring Problem for matrices over finite local rings}
\author{Ram Karan Choudhary}
\email{ramkchoudhary1997@gmail.com}

\author{Harish Kishnani}
\email{harishkishnani11@gmail.com}
\author{Anupam Singh}
\email{anupamk18@gmail.com}
\address{Indian Institute of Science Education and Research Pune, Dr Homi Bhabha Road, Pashan, Pune 411008 India}

\thanks{The first-named author acknowledges the support of IISER Pune for the institute postdoctoral fellowship. The second-named author acknowledges the support of ANRF-NPDF grant PDF/2025/002961 for the postdoctoral fellowship. The third-named author is funded by an ANRF-MATRICS Grant ANRF/ARGM/2025/000095/MTR}
\subjclass[2020]{11P05,11G25,16S50}
\keywords{Matrix Waring Problem, Local rings, Polynomial maps}
\date{\today}

\begin{abstract}
This paper addresses the matrix Waring problem for matrices over finite principal local rings. Let $\mathcal{O}_{\ell}$ be a finite principal local ring of length $\ell$ with the maximal ideal $\mathfrak{m}$ and the residue field $\mathbb{F}_q = \mathcal{O}_\ell/\mathfrak{m}$. When $-1$ is a $k$-th power in $\mathbb{F}_q$ and the characteristic of $\mathbb{F}_q$ does not divide $k$, we show that for sufficiently large $q$, any matrix in $M_n(\mathcal{O}_\ell)$ can be expressed as a sum of two $k$-th powers. Furthermore, we establish that these two conditions are strictly necessary for the result to hold in general.
\end{abstract}
\maketitle

\section{Introduction}
The classical Waring’s problem, originally proposed by Edward Waring, is one of the foundational pillars of number theory. In its classical formulation, the problem asks whether, for every positive integer $k$, there exists a minimal number $g(k)$ such that every natural number can be expressed as a sum of at most $g(k)$ many $k$-th powers of non-negative integers. While Hilbert initially showed that $g(k)$ exists for arbitrary fields, later advancements led to the discovery of explicit formulas for $g(k)$ for all but a finite number of values of $k$. For a detailed survey, see \cite{VW02}. This led to the study of similar questions in non-commutative algebraic structures, such as (a) word maps on algebraic groups, finite simple groups, Lie groups, (b) polynomial maps on central simple algebras, the L'vov-Kaplansky conjecture, the Matrix Waring problem, and (c) Lie polynomial maps on Lie algebras, etc. 

For example, for any odd positive integer $k$, \cite{LOST-Burnside-Theorem} established that every element of a non-abelian finite simple group is expressible as a product of three $k$-th powers. Shalev further generalized this in~\cite{Shalev09} by showing that in sufficiently large-sized finite simple groups, every element is expressible as a product of $3$ evaluations of any given non-identity word. It was later optimized to $2$ evaluations by Larsen, Shalev, and Tiep in \cite{LST11}. In a broader context, the study of word maps on groups has become a highly active area of research within group theory, with particular focus on analyzing their images and the sizes of their fibers (for example, see \cite{LS09, PP15, GKP18-2, GKP18, LST19, EPS24, KKK25}). 

The framework of Waring’s problem also extends naturally to rings, with matrix rings serving as a prime example. The matrix Waring problem asks if, for a given natural number $k$, does there exist $g(k)$, smallest if it exists, such that every element of $M_n(\mathcal R)$ is a sum of $g(k)$ many $k$-th powers where $\mathcal R$ is a commutative ring with unity? In 1985, Newman \cite{Newman85} showed that every element of the matrix ring $M_2(\mathbb Z)$ is a sum of $3$ squares. It was generalised to $M_n(\mathbb Z)$ by Vaserstein \cite{Vaserstein86} in 1986. For a commutative associative ring $R$ with unity, Richman \cite{Richman87} proved that an element of $M_n(R)$ is a sum of $k$-th powers if and only if it is a sum of seven $k$-th powers. Katre and Khule \cite{KaKh00} proved that if $R$ is an order in an algebraic number field $K$, then every element of $M_n(R)$ is a sum of at most seven $k$-th powers if and only if $(k, disc(R)) = 1$. In \cite{BS23}, Bre\v{s}ar and \v{S}emrl demonstrated that if a non-commutative polynomial $f$ is not a polynomial identity or a central polynomial for $M_n(\C)$, then its difference set $f(M_n(\C)) - f(M_n(\C))$ is large enough that any trace-zero matrix can be represented as the sum of two of its members (also see \cite{B04, B20, BS23-2}). Waring's problem can also be generalized to polynomial maps on algebras. A central focus here is the L’vov-Kaplansky conjecture (see \cite{Kaplansky57}), a problem that remains a vibrant topic of research for matrix algebra specialists despite considerable progress (for more details, see \cite{KMR12, BW13, DK16, KMR16, KMRY20}). An analogous problem, motivated by the Waring's problem, has been studied over matrix algebras and Lie algebras (for example, see \cite{DKR21, KS24, KMR25, PSS25, KK25}).

Let $\mathbb {F}_q$ be the finite field of size $q$. In \cite{Kishore22}, Kishore proved that for each integer $k\geq 1$, there exists a constant $C_k$, depending only on $k$ such that for all $q > C_k$; if $n=1,2$, then every element of $M_n(\mathbb{F}_q)$ is a sum of two $k$-th powers, and if $n\geq 3$, then every element of $M_n(\mathbb{F}_q)$ is a sum of at most three $k$-th powers. It was later optimized by Kishore and Singh in \cite{KiSi25}, where they proved that for each integer $k\geq 1$, there exists a constant $C_k$, depending only on $k$ such that for all $q > C_k$ and for all $n \geq 1$, every element of $M_n(\mathbb{F}_q)$ is a sum of two $k$-th powers. The idea to attempt writing every element as a sum of two powers was suggested by Larsen. In our present work, we extend this result to finite principal local rings.

Throughout this article, we let $\mathcal O_\ell$ be a finite principal local ring of length $\ell$ (for example, $\mathbb Z/p^\ell\mathbb Z, \mathbb F_q[x]/\langle x^\ell \rangle$ etc), $\mathfrak m$ be the maximal ideal in $\mathcal O_\ell$ with uniformizer $\pi$ and the residue field $\mathcal O_\ell/\mathfrak m$ be $\mathbb F_q$. We denote the natural map $ \mathcal O_\ell \rightarrow \mathcal O_\ell/\mathfrak m$ by $\theta$. Consider the matrix ring $M_n(\mathcal{O}_\ell)$ with entries in the ring $\mathcal{O}_\ell$. The map $\theta$ induces a map at the matrix level as well, which we simply denote by $\theta$ by abuse of notation. The setup for our problem is as follows. Let $w(x_1, \ldots, x_m) = x_1^k + \cdots + x_m^k$ be a polynomial which gives rise to a polynomial map $w\colon M_n(R)^m \rightarrow M_n(R)$ given by evaluation in both cases when $R=\mathcal O_\ell$ and $R=\mathbb F_q$. Thus, we have the following commutative diagram:
\[\begin{tikzcd}
	{M_n(\mathcal O_\ell)} & {M_n(\mathcal O_\ell)} \\
	{M_n(\mathbb F_q)} & {M_n(\mathbb F_q)}
	\arrow["w", from=1-1, to=1-2]
	\arrow["\theta"', from=1-1, to=2-1]
	\arrow["\theta", from=1-2, to=2-2]
	\arrow["w"', from=2-1, to=2-2].
\end{tikzcd}\]
Note that both vertical maps are surjective, being induced by the natural quotient map. We consider the following question:
\begin{question}
If the bottom map $w$ at $\mathbb F_q$ level is surjective, then is the top map $w$ at $\mathcal O_\ell$ level surjective?
\end{question} 

In particular, due to Kishore and Singh \cite{KiSi25}, we know that for large enough $q$, $w=x_1^k + x_2^k$ is surjective on $M_n(\mathbb F_q)$. So, our question would be to show that if the base field $\mathbb{F}_q$ is large enough, then $w = x_1^k +x_2^k$ is surjective over $M_n(\mathcal{O}_\ell)$? In this article, we answer this question for $q > C_{k, n}$, where $C_{k,n}$ is a constant depending only on $k$ and $n$ (see Remark \ref{remark: C_{k,n}}). Our main theorem is the following:

\begin{theorem}\label{thm:WaringType M_n(O_l)}
Let $\mathcal O_{\ell}$ be a finite principal local ring of length $\ell$. Let $\mathfrak m$ be the maximal ideal with uniformizer $\pi$ and the residue field $\mathcal O_\ell/\mathfrak m \cong \mathbb F_q$. Assume that $-1$ is a $k$-th power of an element in $\mathbb F_q$ and $\Char(\mathbb F_q) \nmid k$. Then, there exists a constant $C_{k, n}$ (depending only on $n$ and $k$), such that for all $q > C_{k,n}$ every matrix of  $M_n(\mathcal O_\ell)$ is a sum of two $k$-th powers in $M_n(\mathcal O_\ell)$.
\end{theorem}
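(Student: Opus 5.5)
We plan to lift solutions from $M_n(\mathbb F_q)$ to $M_n(\mathcal O_\ell)$ by a Hensel-type argument, applied not to an arbitrary solution but to a well-chosen one. Given $A\in M_n(\mathcal O_\ell)$, we first use the result of Kishore and Singh \cite{KiSi25}: for $q>C_k$ the reduction $\theta(A)$ is a sum $\bar X^k+\bar Y^k$. A naive lift of such a decomposition need not exist, since $k$-th powering is not smooth at nilpotent or non-semisimple points. We will therefore strengthen the finite-field statement to the following: for $q>C_{k,n}$, every $\bar A\in M_n(\mathbb F_q)$ can be written as $\bar X^k+\bar Y^k$ with $\bar X$ \emph{regular semisimple and invertible}, with eigenvalues whose $k$-th powers are pairwise distinct. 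To obtain this, we would revisit the method of \cite{KiSi25}. There one works with conjugacy-class (or rational canonical form) reductions: first reduce $\bar A$ to a convenient form, e.g.\ a companion-type matrix, and then count solutions using character sums or Lang--Weil. The estimate can be arranged so that the degenerate locus, where $\bar X$ fails the condition, has strictly lower dimension. Its contribution is then $O(q^{\dim-1})$, so the large-$q$ bound survives. For the case where $\bar A$ is scalar, we use the hypothesis that $-1$ is a $k$-th power: we can take $\bar X=D$ and $\bar Y=\zeta D'$ for suitable diagonal matrices.

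The second step is the lifting. Let $f(X,Y)=X^k+Y^k$. Its differential at $(X,Y)$ in the direction $(H,0)$ is
\[ df_{X}(H)=\sum_{i=0}^{k-1}X^iHX^{k-1-i}. \]
If $X$ is diagonalizable over $\overline{\mathbb F}_q$ with eigenvalues $\lambda_j$, then on the matrix unit $E_{ij}$ in an eigenbasis this operator acts by the scalar $\sum_i\lambda_i^{a}\lambda_j^{k-1-a}$. For $i\ne j$ this equals $(\lambda_i^k-\lambda_j^k)/(\lambda_i-\lambda_j)$, which is nonzero because the $k$-th powers are distinct. For $i=j$ it equals $k\lambda_i^{k-1}$, which is nonzero because $\Char\nmid k$ and $X$ is invertible. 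Hence $df_X$ is invertible over $\mathbb F_q$. Since this is a determinant condition, it is also invertible as an $\mathcal O_\ell$-linear map on $M_n(\mathcal O_\ell)$.

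We then lift inductively. Suppose $X_r,Y_r$ satisfy $X_r^k+Y_r^k\equiv A\pmod{\mathfrak m^r}$. We set $X_{r+1}=X_r+\pi^rH$, with $Y$ kept fixed (any lift of $\bar Y$). Then
\[ X_{r+1}^k+Y^k\equiv X_r^k+Y^k+\pi^r\,df_{X_r}(H)\pmod{\mathfrak m^{2r}}, \]
and $2r\ge r+1$. So the required equation $df_{X_r}(H)\equiv \pi^{-r}(A-X_r^k-Y^k)\pmod{\mathfrak m}$ is solvable by surjectivity of $df$ modulo $\mathfrak m$. Here we interpret division by $\pi^r$ in $\mathfrak m^r/\mathfrak m^{r+1}\cong\mathbb F_q$, which is valid because $\mathcal O_\ell$ is principal. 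After $\ell-1$ steps we obtain an exact solution.

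The main obstacle is the first step: upgrading \cite{KiSi25} to produce a summand $\bar X$ that is regular semisimple, invertible, and has distinct $k$-th powers of its eigenvalues, uniformly for all $\bar A$ once $q>C_{k,n}$. We would try to choose $\bar X$ inside a fixed maximal torus (e.g.\ of Singer-cycle type, or matching the block structure of $\bar A$). We then ask that $\bar A-\bar X^k$ be a $k$-th power, and bound the bad set by counting $k$-th-power conditions on characteristic polynomials; it is this counting that should produce the constant $C_{k,n}$. We also need to check that the necessity remarks do not conflict: the hypothesis $\Char\nmid k$ is used exactly in the diagonal terms $k\lambda^{k-1}$, and the hypothesis on $-1$ is used for the scalar cases.
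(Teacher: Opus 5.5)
\textbf{There is a genuine gap: your Step 1 is not proved, and it is the heart of the theorem.}

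Your lifting step is sound and is essentially the paper's argument. With $Y$ fixed, the Hensel step $X\mapsto X+\pi^rH$ only needs $D_{\bar X}(H)=\sum_{i=0}^{k-1}\bar X^{i}H\bar X^{k-1-i}$ to be invertible over $\mathbb F_q$, and your eigenvalue computation is Lemma~\ref{lemma:Differential Operator Invertible}. You call Step 1 the main obstacle and leave it open, and none of the routes you sketch closes it.
\begin{itemize}
\item A dimension bound for a degenerate locus inside $M_n(\mathbb F_q)^2$ says nothing about its intersection with a given fibre $\{(X,Y):X^k+Y^k=\bar A\}$. You would need a lower bound, uniform in $\bar A$, on the number of good $\mathbb F_q$-points of every fibre, including those over scalar, nilpotent and derogatory $\bar A$. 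That requires control of the geometry of each fibre, which your sketch does not provide.
\item Fixing a Singer torus for $\bar X$ turns the problem into an unproved equidistribution statement.
\item Counting conditions on characteristic polynomials cannot decide whether $\bar A-\bar X^k$ is a $k$-th power, because for non-semisimple matrices this is not determined by the characteristic polynomial.
\item Your scalar case is also incomplete. For $\bar A=cI$ with $c\neq0$ and diagonal $\bar X$, you need $n$ distinct $u_i\in\mathbb F_q$ with $u_i$ and $c-u_i$ both $k$-th powers. That is a Weil-bound count; it does not follow from $-1$ being a $k$-th power.
\end{itemize}

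The missing idea is supplied by Lemmas~\ref{lemma:full orbit} and~\ref{lemma:Differential Invertible M_n}: an explicit blockwise construction that bypasses \cite{KiSi25} entirely.
\begin{itemize}
\item Write $\bar A$ in primary rational canonical form with blocks $J_{f,r}=\phi(J_{\alpha,r})$, where $\alpha\in\mathbb F_{q^d}$ is a root of $f$.
\item Split $J_{\alpha,r}=(uI_r+N_r)+(\alpha-u)I_r$, where $u$ and $\alpha-u$ are nonzero $k$-th powers in $\mathbb F_{q^d}$. Such $u$ exist by Weil's bound for the absolutely irreducible curve $x^k+y^k=\alpha$ when $\alpha\neq0$. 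When $\alpha=0$ they exist because $-1\in(\mathbb F_q^{\times})^k$, so the hypothesis on $-1$ is used for every block with eigenvalue $0$, not only for scalar $\bar A$.
\item Counting against proper subfields and against orbits already used lets you pick each $u$ with a Frobenius orbit of full length $d$, disjoint from the orbits chosen for the other blocks. This is where the dependence of $C_{k,n}$ on $n$ enters.
\item Then $uI_r+N_r=A_r^k$ with $A_r=\lambda(I_r+\lambda^{-k}N_r)^{1/k}$ and $\lambda^k=u$, and $\bar X=\bigoplus\phi(A_r)$.
\end{itemize}

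This $\bar X$ is not semisimple when $r\ge2$, and it does not need to be. Triangularizing $\bar X$ shows that the eigenvalues of $D_{\bar X}$ are the $\mu(\lambda_i,\lambda_j)$ whether or not $\bar X$ is semisimple. So your regular-semisimple requirement is stronger than the lifting needs. If you insist on it, the same counting also works applied to $\operatorname{diag}(u_1,\dots,u_r)+N_r$. Here each $u_i$ and $\alpha-u_i$ must be $k$-th powers ($u_i\neq0$), and the $u_i$ must have full, pairwise disjoint Frobenius orbits.
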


We further demonstrate in Proposition \ref{prop:counter char(F_q)} and Proposition \ref{prop:counter kth root} that the conditions $\Char(\mathbb F_q) \nmid k$ and $-1$ is a $k$-th power in $\mathbb F_q$ are essential and cannot be omitted in general. The paper is organized as follows: Section \ref{sec Waring-type result O-l} extends a result of Small \cite{Small77} to $\mathcal{O}_\ell$ and proves the main theorem for $n=1$. Section \ref{sec: Surjectivity for Mn(O-l)} contains the full proof of our main result. Lastly, Section \ref{sec assumptions are essential} verifies the necessity of the conditions in Theorem \ref{thm:WaringType M_n(O_l)}, showing that the requirements $\text{char}(\mathbb{F}_q) \nmid k$ and $-1$ being a $k$-th power cannot be omitted.

\section{Waring-type result for $\mathcal O_\ell$.}\label{sec Waring-type result O-l}

In this section, we prove a Waring-type result for $\mathcal O_\ell$ that extends a result of Small \cite{Small77} for finite fields.

\begin{theorem} \label{thm:WaringTypeO_l}
Let $\mathcal O_\ell$ be a finite principal local ring of length $\ell$. Let $\mathfrak m$ be the maximal ideal with uniformizer $\pi$ and the residue field $\mathcal O_\ell/\mathfrak m$ be $\mathbb F_q$. Assume that $q$ is sufficiently large (for instance, $q > k^4$) so that every element of $\mathbb F_q$ is a sum of two $k$-th powers in $\mathbb F_q$, for a fixed $k \in \mathbb{N}$. If $-1$ is a $k$-th power of an element in $\mathbb F_q$ and $\Char(\mathbb F_q) \nmid k$, then every element of $\mathcal O_\ell$ is a sum of two $k$-th powers in $\mathcal O_\ell$ such that at least one of them is a unit in $\mathcal O_\ell$.
\end{theorem}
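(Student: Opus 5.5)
Since every element of $\mathbb F_q$ is a sum of two $k$-th powers and $\Char(\mathbb F_q)\nmid k$, the natural route is Hensel lifting. The key fact is that for a unit $u\in\mathcal O_\ell^\times$, the map $x\mapsto x^k$ is étale at $u$: its derivative $ku^{k-1}$ is a unit because $k$ is invertible in $\mathbb F_q$. So it suffices to write the residue $\theta(a)$ as $\bar x^k+\bar y^k$ with $\bar x\neq 0$. Indeed, lift $\bar y$ arbitrarily to $y\in\mathcal O_\ell$ and consider $f(X)=X^k-(a-y^k)\in\mathcal O_\ell[X]$. Its reduction has the simple root $\bar x$, since $f'(\bar x)=k\bar x^{k-1}\neq 0$. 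Hensel's lemma, valid for the complete (indeed finite, $\pi$-adically nilpotent) local ring $\mathcal O_\ell$, then gives $x\in\mathcal O_\ell$ lifting $\bar x$ with $x^k=a-y^k$. This $x$ is a unit, which proves the statement. For a self-contained argument, I would carry out the lifting by induction on the $\pi$-adic level: if $x_j^k\equiv c \pmod{\pi^{j}}$, set $x_{j+1}=x_j+\pi^{j}t$ and solve the congruence $kx_j^{k-1}t\equiv (c-x_j^k)/\pi^{j}\pmod{\pi}$ for $t$. This is possible because $kx_j^{k-1}$ is a unit. Terms of order $\pi^{2j}$ vanish modulo $\pi^{j+1}$.

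The remaining point, and the main obstacle, is to show that for every $\bar a\in\mathbb F_q$ there is a representation $\bar a=\bar x^k+\bar y^k$ with at least one summand nonzero. This is where the hypothesis that $-1$ is a $k$-th power enters. I would split into two cases.

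\emph{Case $\bar a\neq 0$.} In any representation $\bar a=\bar x^k+\bar y^k$, which exists by the hypothesis on $q$, the two summands cannot both be zero. Swapping them if necessary gives $\bar x\neq 0$.

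\emph{Case $\bar a=0$.} Here the trivial representation $0=0^k+0^k$ is useless. Instead, write $-1=\varepsilon^k$ with $\varepsilon\in\mathbb F_q$, and take $0=1^k+\varepsilon^k$. Then $\bar x=1$ is nonzero, and $\bar y=\varepsilon$ is nonzero as well. This covers every $a\in\mathfrak m$, including $a=0$ and all nonunits of $\mathcal O_\ell$.

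In both cases, lifting via the Hensel step yields $a=x^k+y^k$ with $x\in\mathcal O_\ell^\times$, as claimed. I expect the only delicate point is the case $\bar a=0$. It explains why the assumption on $-1$ is needed: without it, elements of $\mathfrak m$ may fail to be sums of two $k$-th powers, as the paper's later counterexamples indicate. The Hensel step is routine once $\Char(\mathbb F_q)\nmid k$ is used.
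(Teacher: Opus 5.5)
Your proof is correct and follows essentially the paper's route. The residue-field case is handled the same way (a nonzero residue has a nonzero summand in any representation, and $0=1^k+\varepsilon^k$ with $\varepsilon^k=-1$), and the passage to $\mathcal O_\ell$ is a Hensel-type induction on the $\pi$-adic level driven by the unit $kx^{k-1}$; the only differences are that you fix an arbitrary lift of $\bar y$ and lift $x$ alone, where the paper lifts both summands through a two-variable linear congruence, and that you explicitly carry the error term $c-x_j^k$, which the paper's write-up tacitly treats as zero when it asserts $a=x_0^k+y_0^k$ in $\mathcal O_\ell$.
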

\begin{proof}
We prove it by induction on $\ell$. For $\ell =1$, $\mathcal O_\ell = \mathbb F_q$. If $u \in \mathbb F_q^{\times}$, then by a result of Small \cite{Small77}, it is a sum of two $k$-th powers such that at least one of them is a unit. Further, since $-1$ is a $k$-th power, $0$ is also a sum of two non-zero $k$-th powers. Hence, the theorem holds for $\ell=1$. We now assume that it holds for all finite principal local rings of length less than $\ell$ and proceed to prove it for $\mathcal{O}_\ell$. Fix a section of the natural projection $\mathcal O_\ell \twoheadrightarrow \mathbb O_{\ell-1}$, and identify $\mathbb O_{\ell-1}$ with its image in $\mathcal O_{\ell}$. Let $u \in \mathcal{O}_\ell$. We write $u$ as
$$u = a + \pi^{\ell-1} b,$$
where $a , b\in \mathcal{O}_{\ell-1}$.

Let $\bar u$ be the reduction of $u$ modulo $\langle \pi^{\ell-1} \rangle$. Then by the induction hypothesis, we obtain
$$\bar u = a = \bar x_0^{\,k} + \bar y_0^{\,k}, $$

for some $x_0^{\,k}$ and $x_0^{\,k}$ \text{in } $\mathcal{ O}_{\ell-1}$ such that at least one of $x_0$ or $y_0$ is a unit.

To prove the theorem, it suffices to show that there exist $x, y \in \mathcal O_\ell$ such that
\[
u = x^k + y^k \quad \text{in } \mathcal O_\ell.
\]

Assume that
$x = x_0 + \pi^{\ell-1} x_1$ and $y = y_0 + \pi^{\ell-1} y_1$ 
for some $x_0, y_0 \in \mathbb \pi^{\ell-1}$. Then
\[
u = x^k + y^k 
\iff 
a + \pi^{\ell-1} b = (x_0^k + y_0^k) + \pi^{\ell-1} k \bigl(x_0^{k-1} x_1 + y_0^{k-1} y_1\bigr).
\]
We already have $a = x_0^k + y_0^k$. Equating the coefficients of $\pi^{\ell-1}$ on both sides yields
\begin{equation}\label{eq:O_2}
x_0^{k-1} x_1 + y_0^{k-1} y_1 = k^{-1} b .
\end{equation}

Observe that equation \eqref{eq:O_2} is a non-trivial linear equation in the two variables $x_1$ and $y_1$ (since $x_0$ and $y_0$ are fixed) over $\mathcal O_{\ell-1}$. By induction hypothesis, at least one of $x_0$ and $y_0$ is a unit. Thus, \eqref{eq:O_2} admits a solution over $\mathcal O_{\ell-1}$. Consequently, there exist $x = x_0 + \pi^{\ell-1} x_1$ and $y = y_0 + \pi^{\ell-1} y_1$ in $\mathcal O_\ell$ such that
\[
u = x^k + y^k \quad \text{in } \mathcal O_\ell.
\]
This completes the proof of Theorem~\ref{thm:WaringTypeO_l}.
\end{proof}

The following remark shows that the assumptions in Theorem \ref{thm:WaringTypeO_l} cannot be relaxed.

\begin{remark}
(i). Let $\mathcal{O}_2$ be a finite principal local ring of length $2$ over $\mathbb{F}_3$ with maximal ideal $\mathfrak{m} = \langle \pi \rangle$. If $k=2$, every element of $\mathbb{F}_3$ is a sum of two squares, $\Char(\mathbb{F}_3) \nmid k$, but $-1$ is not a square in $\mathbb{F}_3$. Under these conditions, $\pi$ and $-\pi$ cannot be expressed as a sum of two squares in $\mathcal{O}_2$, demonstrating that the assumption that $-1$ is a $k$-th power in $\mathbb{F}_q$ cannot be omitted from Theorem \ref{thm:WaringTypeO_l} in general.

(ii). Let $\mathcal{O}_3$ be a finite principal local ring of length $3$ over $\mathbb{F}_2$ with maximal ideal $\mathfrak{m} = \langle \pi \rangle$. If $k=2$, every element of $\mathbb{F}_2$ is a sum of two squares, $-1$ is a square in $\mathbb{F}_2$, but $\Char(\mathbb{F}_2) \mid k$. Again, under these conditions, the elements $\pi$ and $-\pi$ cannot be expressed as a sum of two squares in $\mathcal{O}_3$, demonstrating that the assumption that $\Char(\mathbb{F}_2) \nmid k$ cannot be omitted from Theorem \ref{thm:WaringTypeO_l} in general.
\end{remark}

\section{Surjectivity for $M_n(\mathcal O_{\ell})$}\label{sec: Surjectivity for Mn(O-l)}

This section is dedicated to the proof of our main theorem. We begin by establishing a crucial preparatory lemma.

\begin{lemma} \label{lemma:Differential Operator Invertible}
Let $\mathbb{F}$ be a field, $n\ge 1$, and $A\in M_n(\mathbb{F})$ an invertible matrix.
Let $k\ge 1$ be an integer such that $\operatorname{char}(\mathbb{F})\nmid k$.
Define the linear map
\[
D_A : M_n(\mathbb{F}) \longrightarrow M_n(\mathbb{F}),\qquad
D_A(U)=\sum_{i=0}^{k-1} A^{\,i} U A^{\,k-1-i}.
\]
If for every pair of eigenvalues $\alpha,\beta$ of $A$ (in an algebraic closure $\overline{\mathbb{F}}$) with $\alpha\neq\beta$ one has $\alpha^{\,k}\neq\beta^{\,k}$, then $D_A$ is invertible.
\end{lemma}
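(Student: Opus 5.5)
The plan is to compute the eigenvalues of $D_A$ explicitly after extending scalars to $\overline{\mathbb{F}}$. Invertibility of a linear map is unchanged by field extension, since it is equivalent to $\det D_A\neq 0$. So we may work over $\overline{\mathbb{F}}$ throughout.

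Write $D_A=\sum_{i=0}^{k-1} L_A^{\,i}R_A^{\,k-1-i}$, where $L_A(U)=AU$ and $R_A(U)=UA$ are commuting operators on $M_n(\overline{\mathbb{F}})$. We triangularize them simultaneously. Choose $P$ with $T=P^{-1}AP$ upper triangular, with diagonal entries $\alpha_1,\dots,\alpha_n$. Conjugating by $P$ identifies $D_A$ with $D_T$, so we may assume $A=T$.

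Now order the matrix units $E_{ij}$ suitably, for instance by $(i,-j)$ or by $j-i$. With such an ordering, $L_T$ and $R_T$ are both triangular, with $L_T(E_{ij})=\alpha_iE_{ij}+(\text{lower terms})$ and $R_T(E_{ij})=\alpha_jE_{ij}+(\text{lower terms})$. This is the standard fact that the eigenvalues of $X\mapsto AX$ and $X\mapsto XB$, realized jointly as $A\otimes I$ and $I\otimes B^{T}$, are the pairs $(\alpha_i,\beta_j)$, with a common triangular basis. Consequently $D_T$ is triangular with diagonal entries
\[
\lambda_{ij}=\sum_{s=0}^{k-1}\alpha_i^{\,s}\alpha_j^{\,k-1-s},\qquad 1\le i,j\le n.
\]
Checking the common triangular ordering is the only slightly fiddly step. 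One can avoid it by noting that $D_A$ is a polynomial in the commuting pair $(L_A,R_A)$, and $L_A\cong A\otimes I$, $R_A\cong I\otimes A^{T}$ are commuting operators with a common triangularizing basis $v_i\otimes w_j$.

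It remains to show every $\lambda_{ij}\neq 0$, and we split into two cases.

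If $\alpha_i=\alpha_j=\alpha$, then $\lambda_{ij}=k\alpha^{k-1}$. This is nonzero because $\operatorname{char}\mathbb{F}\nmid k$ and $\alpha\neq 0$, the latter since $A$ is invertible.

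If $\alpha_i\neq\alpha_j$, then $(\alpha_i-\alpha_j)\lambda_{ij}=\alpha_i^{\,k}-\alpha_j^{\,k}$, which is nonzero by hypothesis. Hence $\lambda_{ij}\neq0$.

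So $\det D_A=\prod_{i,j}\lambda_{ij}\neq 0$, and $D_A$ is invertible.
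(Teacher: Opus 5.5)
Your proof is correct and follows essentially the same route as the paper: pass to $\overline{\mathbb F}$, triangularize $A$, show $D_A$ is triangular on the matrix units with diagonal entries $\sum_{s}\alpha_i^{s}\alpha_j^{k-1-s}$, and split into the cases $\alpha_i=\alpha_j$ and $\alpha_i\neq\alpha_j$. Your ordering by $(i,-j)$ (equivalently, the tensor basis $v_i\otimes w_j$ with $w_j$ triangularizing $A^{T}$) is in fact the right choice. The paper's plain lexicographic order on $(a,b)$ makes $U\mapsto UA^{k-1-i}$ lower rather than upper triangular, so your version quietly repairs a slip in the paper's argument.
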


\begin{proof}
We work over the algebraic closure $\overline{\mathbb{F}}$. Since the determinant of $D_A$ is a polynomial in the entries of $A$ with coefficients in $\mathbb{F}$, it suffices to prove that $D_A$ is invertible over $\overline{\mathbb{F}}$. This implies that $\det(D_A)$ is a nonzero element of $\mathbb{F}$, and hence $D_A$ is invertible over $\mathbb{F}$.

Choose a basis of $\overline{\mathbb{F}}^{\,n}$ so that $A$ is upper triangular. Write
\[
A=
\begin{pmatrix}
\lambda_1 & * & \cdots & *\\
0 & \lambda_2 & \cdots & *\\
\vdots & \vdots & \ddots & \vdots\\
0 & 0 & \cdots & \lambda_n
\end{pmatrix},
\]
where $\lambda_1,\dots,\lambda_n$ are the eigenvalues of $A$ in $\overline{\mathbb{F}}$, counted with algebraic multiplicity. Since $A$ is invertible, we have $\lambda_i \neq 0$ for every $i$.

Let $E_{ab}$ denote the standard matrix unit with a $1$ in the $(a,b)$-position and zeros elsewhere. The collection $\{E_{ab} : 1\le a,b\le n\}$ is a basis of $M_n(\overline{\mathbb{F}})$. Order this basis lexicographically by $(a,b)$, i.e.
\[
E_{11},\, E_{12},\, \dots,\, E_{1n},\, E_{21},\, E_{22},\, \dots,\, E_{nn}.
\]

For each $i$, define the left multiplication map
\[
L_{A^i}: M_n(\overline{\mathbb F}) \to M_n(\overline{\mathbb F}) \quad \text{and} \quad L_{A^i}(X)=A^i X,
\]
and the right multiplication map
\[
R_{A^{k-1-i}}: M_n(\overline{\mathbb F}) \to M_n(\overline{\mathbb F})\quad \text{and} \quad R_{A^{k-1-i}}(X)=X A^{k-1-i}.
\]
Then we have
\[
D_A = \sum_{i=0}^{k-1} L_{A^i} \circ R_{A^{k-1-i}}.
\]

We analyze the action of these maps on the basis $\{E_{ab}\}$. Since $A^i$ is upper triangular with diagonal entries $\lambda_a^i$, its $a$-th column has nonzero entries only in rows $1,\dots,a$. Therefore,
\[
L_{A^i}(E_{ab}) = A^i E_{ab}
\]
is a linear combination of the basis vectors $E_{1b}, E_{2b}, \dots, E_{ab}$.
In the lexicographic order, each of these vectors has index $(r,b)$ with $r\le a$, so $(r,b)\le (a,b)$. Hence, the matrix of $L_{A^i}$ with respect to this ordered basis is upper triangular. Moreover, the coefficient of $E_{ab}$ itself is the $(a,a)$-entry of $A^i$, which is $\lambda_a^i$. Thus, the diagonal entry of $L_{A^i}$ corresponding to $E_{ab}$ is $\lambda_a^i$.

Similarly, since $A^{k-1-i}$ is upper triangular with diagonal entries $\lambda_b^{k-1-i}$, its $b$-th row has nonzero entries only in columns $b,\dots,n$. Hence,
\[
R_{A^{k-1-i}}(E_{ab}) = E_{ab} A^{k-1-i}
\]
is a linear combination of the basis vectors $E_{ab}, E_{a(b+1)}, \dots, E_{an}$.
In the lexicographic order, each of these vectors has index $(a,c)$ with $c\ge b$, so $(a,c)\ge (a,b)$. Therefore, the matrix of $R_{A^{k-1-i}}$ is also upper triangular with respect to this ordered basis, and its diagonal entry corresponding to $E_{ab}$ is $\lambda_b^{k-1-i}$.

For each fixed $i$, the composition $L_{A^i}\circ R_{A^{k-1-i}}$ is upper triangular, and its diagonal entry corresponding to $E_{ab}$ is the product of the diagonal entries
\[
\lambda_a^{i} \cdot \lambda_b^{k-1-i}.
\]
Since $D_A$ is the sum of these compositions over $i=0,\dots,k-1$, it is also upper triangular. Its diagonal entry corresponding to $E_{ab}$ is therefore
\[
\sum_{i=0}^{k-1} \lambda_a^{i} \lambda_b^{k-1-i}
= \mu(\lambda_a,\lambda_b),
\]
where we set
\[
\mu(\alpha,\beta):=\sum_{i=0}^{k-1}\alpha^i\beta^{k-1-i}.
\]
Since the matrix of $D_A$ in this ordered basis is upper triangular, its eigenvalues are exactly its diagonal entries (with algebraic multiplicities). Therefore, the eigenvalues of $D_A$ are precisely
\[
\mu(\lambda_i,\lambda_j), \qquad 1 \le i,j \le n,
\]
where repetitions are counted according to multiplicity.

Next, we show that every eigenvalue $\mu(\lambda_i,\lambda_j)$ of $D_A$ is nonzero under the hypothesis that for any distinct eigenvalues $\alpha,\beta$ of $A$, one has $\alpha^k\neq\beta^k$. We consider two cases.

\paragraph{\bf Case 1 ($\alpha=\beta=\lambda$).} In this case, we have
\[
\mu(\lambda,\lambda)
=
\sum_{i=0}^{k-1}\lambda^i\lambda^{k-1-i}
=
\sum_{i=0}^{k-1}\lambda^{k-1}
=
k\lambda^{k-1}.
\]
Since $\operatorname{char}(\mathbb{F})\nmid k$, the element $k$ is nonzero in $\overline{\mathbb{F}}$. Moreover, $\lambda^{k-1}\neq 0$ because $\lambda\neq 0$. Hence, we get
$ \mu(\lambda,\lambda)\neq 0$.\\

\paragraph{\bf Case 2 ($\alpha\neq\beta$).} In this case, the sum is a finite geometric series
\[
\mu(\alpha,\beta)
=
\alpha^{k-1}+\alpha^{k-2}\beta+\cdots+\beta^{k-1}
=
\frac{\alpha^k-\beta^k}{\alpha-\beta}.
\]
By hypothesis, $\alpha^k\neq\beta^k$ whenever $\alpha\neq\beta$. Since the denominator $\alpha-\beta$ is also nonzero, it follows that $\mu(\alpha,\beta)\neq 0$.

Thus, every eigenvalue $\mu(\lambda_i,\lambda_j)$ of $D_A$ is nonzero. Therefore, all eigenvalues of $D_A$ are nonzero, which implies that $D_A$ is invertible over $\overline{\mathbb{F}}$. Since $\det(D_A)\in\mathbb{F}$ and is nonzero in $\overline{\mathbb{F}}$, it is also nonzero in $\mathbb{F}$. Hence, $D_A$ is invertible over the original field $\mathbb{F}$. This completes the proof of Lemma~\ref{lemma:Differential Operator Invertible}.
\end{proof}

Let $N$ denote the number of $\mathbb{F}_q$-rational points $(x,y)$ satisfying $y^d=f(x)$, where $m=\deg(f)$. A special case of Lang--Weil theorem (see \cite{LW54,Weil49}) asserts that if the polynomial $Y^d-f(X)$ is absolutely irreducible and $q>100dm^2$, then
\[
|N-q|\leq 4d^{3/2}m\sqrt{q}
\]
(see page $10$, \cite{Schmidt76}). We now prove Lemma~\ref{lemma:full orbit}, whose proof is based on Weil's estimate for the number of $\mathbb{F}_q$-rational solutions of the polynomial equation $Y^d-f(X)$.
\begin{lemma} \label{lemma:full orbit}
Let $\mathbb F_q$ be a finite field, let $k\ge 1$ be an integer such that $\operatorname{char}(\mathbb F_q)\nmid k$, and suppose that $-1$ is a $k$-th power in $\mathbb F_q$. Let $d\ge 1$, and let $\alpha\in\mathbb F_{q^d}$. Define
\[
G:=\{t^k : t\in \mathbb F_{q^d}^\times\} \quad \text{and} \quad
S:=\{u\in G : \alpha-u\in G\}.
\]
Assume that
\[
q > \max\left(100k^3,\; \left(2k^{5/2}+\sqrt{4k^5+k^2+2k}\right)^2\right).
\]
Then there exists $u\in S$ such that the Frobenius conjugates
\[
u,\ u^q,\ u^{q^2},\ \dots,\ u^{q^{d-1}}
\]
are pairwise distinct.
\end{lemma}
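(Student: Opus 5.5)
The plan is to work over the field $\mathbb F_Q$ with $Q=q^d$, count elements of $S$ with a Weil-type estimate, and compare that count with the number of $u\in\mathbb F_{Q}$ whose Frobenius orbit is \emph{not} of full length. Such $u$ are exactly the elements of some proper subfield $\mathbb F_{q^e}$ with $e\mid d$, $e<d$. For $d=1$ every element has pairwise distinct conjugates trivially, so it suffices to show $S\neq\emptyset$. In general the number of bad elements is at most
\[
B:=\sum_{e\mid d,\ e\le d/2} q^e \le \sum_{j\le d/2} q^j \le \frac{q}{q-1}\,Q^{1/2}\le 2Q^{1/2}.
\]
So the goal is to prove $|S|>B$ (or just $|S|>0$ when $d=1$).

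First I dispose of $\alpha=0$. Since $-1$ is a $k$-th power in $\mathbb F_q\subseteq\mathbb F_Q$, we have $-u\in G$ whenever $u\in G$, so $S=G$. Then $|S|=|G|=(Q-1)/\gcd(k,Q-1)\ge (Q-1)/k$. This exceeds $2Q^{1/2}$ as soon as $Q>(2k+1)^2$, which the hypothesis on $q\le Q$ easily guarantees.

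For $\alpha\ne 0$ I consider the affine curve $C\colon Y^k=\alpha-X^k$ over $\mathbb F_Q$. Because $\operatorname{char}\nmid k$ and $\alpha\neq0$, the polynomial $\alpha-X^k$ is separable with $k$ distinct roots. Hence $Y^k-(\alpha-X^k)$ is absolutely irreducible; for instance by Eisenstein at any linear factor $X-\rho$ of $\alpha-X^k$ in $\overline{\mathbb F}_q[X]$, since $\rho$ is a simple root. I then apply the quoted Lang--Weil bound over $\mathbb F_Q$ with degrees $d=m=k$. The condition needed is $Q>100k^3$, which holds because $Q\ge q>100k^3$. This gives
\[
N\ge Q-4k^{5/2}Q^{1/2}.
\]
Points with $X=0$ or $Y=0$ number at most $2k$. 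Each remaining point $(x,y)$ gives $u=x^k\in G$ with $\alpha-u=y^k\in G$, so $u\in S$, and each $u$ arises from at most $k^2$ such points. Therefore
\[
|S|\ge \frac{Q-4k^{5/2}Q^{1/2}-2k}{k^2}.
\]

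The final step, which I expect to be the main (if routine) obstacle, is checking that this beats $B\le 2Q^{1/2}$, i.e.
\[
Q-(4k^{5/2}+2k^2)Q^{1/2}-2k>0,
\]
and matching this with the stated constant $\bigl(2k^{5/2}+\sqrt{4k^5+k^2+2k}\bigr)^2$. That constant is precisely the square of the larger root of a quadratic in $x=Q^{1/2}$ of this shape. So I would solve the quadratic, possibly tightening the estimate $B\le 2Q^{1/2}$ to $B\le \tfrac{q}{q-1}Q^{1/2}$ and distributing the small terms so that the discriminant reads $4k^5+k^2+2k$. I would then observe that the quadratic is increasing beyond its larger root, so $q$ exceeding that root implies $Q=q^d$ does too. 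Then $|S|>B$ (and in particular $S\neq\emptyset$), which yields the desired $u\in S$ whose conjugates $u,u^q,\dots,u^{q^{d-1}}$ are pairwise distinct.
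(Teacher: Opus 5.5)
Your architecture matches the paper's proof: you bound the elements of proper subfields by $\frac{q}{q-1}q^{d/2}\le 2q^{d/2}$, dispose of $\alpha=0$ via $S=G$, and for $\alpha\neq 0$ combine absolute irreducibility, the Weil bound over $\mathbb F_{Q}$ and the fibre count to get $|S|\ge (Q-4k^{5/2}Q^{1/2}-2k)/k^2$. All of that is correct. The gap is in the final step, which you call routine but plan to carry out in a way that cannot work.

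Write $C':=\bigl(2k^{5/2}+\sqrt{4k^5+k^2+2k}\bigr)^2$. This is not the square of the larger root of $x^2-(4k^{5/2}+2k^2)x-2k$. Nor is it the square of the larger root of the tightened version with $k^2\frac{q}{q-1}$ in place of $2k^2$. It is the square of the larger root of $x^2-4k^{5/2}x-(k^2+2k)$, which has no $k^2x$ term at all. In fact $\sqrt{C'}<4k^{5/2}+k^2$. But the larger root of $x^2-cx-2k$ exceeds $c$, and your $c$ is at least $4k^{5/2}+k^2$. For $k\ge 3$ one has $C'>16k^5>100k^3$, so $C'$ is the binding term in the hypothesis. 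Knowing only $Q\ge q>C'$ therefore does not give $|S|>B$. For example, at $k=3$ you get $Q>3918$, but you need roughly $Q>5100$ even with the tightened bound. No redistribution of the small terms will make the discriminant come out to $4k^5+k^2+2k$.

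The missing observation is that for $d\ge 2$ you should use $Q^{1/2}=q^{d/2}\ge q$, not merely $Q\ge q$. The required inequality $Q-(4k^{5/2}+2k^2)Q^{1/2}-2k>0$ then follows at once from $Q^{1/2}\ge q>100k^3>4k^{5/2}+2k^2+2$. This is exactly how the paper handles its Case 2.

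The constant $C'$ is only needed for $d=1$. There $B=0$, and you need only $q-4k^{5/2}\sqrt q-2k>0$. This holds because $\sqrt q>2k^{5/2}+\sqrt{4k^5+k^2+2k}>2k^{5/2}+\sqrt{4k^5+2k}$, the larger root of $x^2-4k^{5/2}x-2k$.

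With the cases split this way (the $d=1$ split you already anticipated), your argument is complete and coincides with the paper's.
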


\begin{proof}
Let $N(\alpha)$ be the number of solutions $(x,y)\in\mathbb F_{q^d}^2$ to
\[
x^k+y^k=\alpha.
\]
If $\alpha=0$, then since $-1\in G$, we have $-u\in G$ for every $u\in G$, so $S=G$. It remains to note that $G$ contains an element whose Frobenius conjugates are pairwise distinct. For $d=1$, this is trivial because $G\ne\varnothing$. For $d\ge2$, let $D:=\gcd(q^d-1,k)$. Then
\[
|G|=\frac{q^d-1}{D}\ge \frac{q^d-1}{k}.
\]
We claim that $q^d-1>2k\,q^{d/2}$. Let $x:=q^{d/2}$. Since $d\ge2$, we have $x\ge \sqrt q$. The hypothesis $q>100k^3$ gives $\sqrt q>10k^{3/2}$, hence $x>10k^{3/2}$.

For every $k\ge1$, we have $\sqrt{k^2+1}\le \sqrt{2}\,k$, so
\[
k+\sqrt{k^2+1} \le (1+\sqrt{2})k < 10k \le 10k^{3/2}.
\]
Thus, we have $x>10k^{3/2} > k+\sqrt{k^2+1}$. The positive root of $X^2-2kX-1$ is exactly $k+\sqrt{k^2+1}$, so $x^2-2kx-1>0$, which is equivalent to
\[
x^2-1>2kx.
\]
Substituting $x=q^{d/2}$ gives $q^d-1>2kq^{d/2}$. Therefore, we have
\[
|G|>\frac{2k\,q^{d/2}}{k}=2q^{d/2}.
\]
The elements of $G$ lying in proper subfields $\mathbb F_{q^e}$ with $e\mid d$, $e<d$, number at most
\[
\sum_{\substack{e\mid d\\ e<d}}(q^e-1)\le \sum_{i=1}^{\lfloor d/2\rfloor} q^i
\le \frac{q}{q-1}q^{d/2}\le 2q^{d/2},
\]
since $q\ge2$. Thus, some $u\in G$ lies outside all proper subfields, and its Frobenius orbit has size $d$. Hence, the conclusion holds for $\alpha=0$. We now assume $\alpha\ne0$.

The polynomial $X^k+Y^k-\alpha$ is absolutely irreducible because $\operatorname{char}(\mathbb F_q)\nmid k$ and $\alpha\ne0$. Hence, Weil's bound (see page $10$, \cite{Schmidt76}) gives
\begin{equation}\label{eq:Nlower}
|N(\alpha)-q^d|\le 4k^{5/2}q^{d/2}.
\end{equation}

Let
\[
D:=\gcd(q^d-1,k).
\]
Every nonzero element of $\mathbb F_{q^d}$ that is a $k$-th power has exactly $D$ preimages under the map $t\mapsto t^k$. For each $u\in S$, there are $D$ choices for $x$ with $x^k=u$ and $D$ choices for $y$ with $y^k=\alpha-u$, yielding $D^2$ solutions. The exceptional cases $x=0$ or $y=0$ contribute at most $2D$ solutions (since each equation $y^k=\alpha$ or $x^k=\alpha$ has at most $D$ solutions when $\alpha\ne0$). Hence, we get
\begin{equation}\label{eq:Nupper}
N(\alpha)\le D^2|S|+2D.
\end{equation}
Combining \eqref{eq:Nlower} and \eqref{eq:Nupper} gives
\begin{equation}\label{eq:Slower}
|S|\ge \frac{q^d-4k^{5/2}q^{d/2}-2D}{D^2}.
\end{equation}

\paragraph{\bf Case 1 ($d=1$).}
If $d=1$, then $\mathbb F_{q^d}=\mathbb F_q$. The Frobenius conjugates of any $u\in\mathbb F_q$ are just the singleton $\{u\}$, which are trivially pairwise distinct. It remains to show that $S\ne\varnothing$. We have already shown
\[
q-4k^{5/2}\sqrt q-2k>0
\]
(this follows from the hypothesis exactly as in the statement of the lemma). From \eqref{eq:Slower}, using $D\le k$, we get
\[
|S| \ge \frac{q - 4k^{5/2}\sqrt{q} - 2D}{D^2}
\ge \frac{q - 4k^{5/2}\sqrt{q} - 2k}{k^2} > 0.
\]
Since $|S|$ is an integer, $|S|\ge1$. Hence, any $u\in S$ satisfies the conclusion.

\paragraph{\bf Case 2 ($d\ge 2$).}
We claim that
\[
|S|>2q^{d/2}.
\]
Using $D\le k$ in \eqref{eq:Slower}, it suffices to prove
\[
\frac{q^d-4k^{5/2}q^{d/2}-2k}{k^2}>2q^{d/2},
\]
which is equivalent to
\begin{equation}\label{eq:main}
q^{d/2}>4k^{5/2}+2k^2+\frac{2k}{q^{d/2}}.
\end{equation}
Since $d\ge2$, we have $q^{d/2}\ge q$. The hypothesis gives $q>100k^3$. We note that for every $k\ge1$,
\[
100k^3 > 4k^{5/2}+2k^2+2.
\]
Indeed, dividing by $k$ gives $100k^2 > 4k^{3/2}+2k+2$, which is immediate (for $k=1$ it is $100>8$, and the difference is increasing for $k\ge1$). Hence, we have
\[
q>100k^3>4k^{5/2}+2k^2+2.
\]
Since $q>k$, we have $2k/q\le2$, so $2k^2+2k/q\le 2k^2+2$. Therefore
\[
q>4k^{5/2}+2k^2+\frac{2k}{q}.
\]
Because $q^{d/2}\ge q$, we get
\[
q^{d/2}\ge q>4k^{5/2}+2k^2+\frac{2k}{q}
\ge 4k^{5/2}+2k^2+\frac{2k}{q^{d/2}},
\]
which proves \eqref{eq:main}. Consequently, we have $|S|>2q^{d/2}$.

We now call an element $u\in S$ \emph{bad} if it lies in a proper subfield $\mathbb F_{q^e}$ with $e\mid d$ and $e<d$. Let
\[
S_0:=\bigcup_{\substack{e\mid d\\ e<d}} (S\cap \mathbb F_{q^e}).
\]
For each proper divisor $e$, the set $G\cap\mathbb F_{q^e}$ has at most $q^e-1$ elements. Hence, we have
\begin{equation}\label{eq:bad}
|S_0|\le \sum_{\substack{e\mid d\\ e<d}} (q^e-1)
\le \sum_{i=1}^{\lfloor d/2\rfloor} q^i
\le \frac{q}{q-1}q^{d/2}
\le 2q^{d/2},
\end{equation}
since $q\ge2$. From $|S|>2q^{d/2}$ and \eqref{eq:bad} we get $|S|>|S_0|$. Therefore, there exists $u\in S\setminus S_0.$

By construction, $u$ is not contained in any proper subfield $\mathbb F_{q^e}$. Equivalently, its minimal polynomial over $\mathbb F_q$ has degree exactly $d$, so the Frobenius conjugates
\[
u,\ u^q,\ u^{q^2},\ \dots,\ u^{q^{d-1}}
\]
are pairwise distinct. This completes the proof of Lemma~\ref{lemma:full orbit}.
\end{proof}

We now prove Lemma~\ref{lemma:Differential Invertible M_n}, which is a key step toward the main theorem. We begin by recalling a standard embedding that reduces the problem to the study of Jordan blocks. Let $R=\mathbb{F}_{q^n}=\mathbb{F}_q(\alpha)$, where $\alpha$ is a primitive element, and let $\{1,\alpha,\ldots,\alpha^{n-1}\}$ be the natural basis of $R$ over $\mathbb{F}_q$. Multiplication by any $\beta\in R$ defines an $\mathbb{F}_q$-linear endomorphism of $R$, yielding an injective ring homomorphism $\phi:R\hookrightarrow M_n(\mathbb{F}_q)$. If $f(t)$ denotes the minimal polynomial of $\alpha$ over $\mathbb{F}_q$, then $\phi(\alpha)$ is the companion matrix $C_f$. Applying $\phi$ entrywise and using the canonical isomorphism $M_d(M_n(\mathbb{F}_q))\cong M_{nd}(\mathbb{F}_q)$, we obtain an embedding $M_d(\mathbb{F}_{q^n})\hookrightarrow M_{nd}(\mathbb{F}_q)$ that maps $J_{\alpha,d}$ to $J_{f,d}$. We define
$$
C_k=\max\left(100k^3,\;\left(2k^{5/2}+\sqrt{4k^5+k^2+2k}\right)^2\right).
$$

\begin{lemma}\label{lemma:Differential Invertible M_n}
Let $\mathbb{F}_q$ be a finite field. Fix $k \in \mathbb{N}$ such that $\operatorname{char}(\mathbb{F}_q) \nmid k$. Assume that $q > C_k$
and $-1$ is a $k$-th power in $\mathbb{F}_q$. Let $X \in M_n(\mathbb{F}_q)$ with
\[
n < \frac{q - 4k^{5/2}\sqrt q - 2k}{k^2}.
\]
Then there exists an invertible matrix $A \in M_n(\mathbb{F}_q)$ such that $X=A^k+B^k$ for some $B \in M_n(\mathbb F_q)$, and the linear map
\[
D_A: M_n(\mathbb F_q)\to M_n(\mathbb F_q),\qquad 
D_A(U)=\sum_{i=0}^{k-1} A^{\,i} U A^{\,k-1-i}
\]
is invertible.  
\end{lemma}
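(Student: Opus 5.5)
The plan is to build $A$ block by block from a canonical form of $X$, choose the $k$-th powers of the eigenvalues of $A$ using Lemma~\ref{lemma:full orbit}, and then conclude with Lemma~\ref{lemma:Differential Operator Invertible}. Since $D_{PAP^{-1}}(PUP^{-1}) = P\,D_A(U)\,P^{-1}$, invertibility of $D_A$ is a conjugation invariant, and so is the property of being a sum of two $k$-th powers. So we may replace $X$ by a conjugate: the block-diagonal sum of the generalized Jordan blocks $J_{f,d}$, one for each elementary divisor $f^d$ of $X$, with $f$ irreducible of degree $e$ and $\sum ed = n$.

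For each such block, let $\alpha \in \mathbb F_{q^e}$ be a root of $f$. Through the embedding $M_d(\mathbb F_{q^e}) \hookrightarrow M_{de}(\mathbb F_q)$ recalled before the statement, the block $J_{f,d}$ is the image of $J_{\alpha,d} = \alpha I + N$ with $N$ nilpotent. I would first decompose inside $M_d(\mathbb F_{q^e})$:
\[
J_{\alpha,d} = (uI+N) + (\alpha-u)I,
\]
where $u$ lies in the set $S$ of Lemma~\ref{lemma:full orbit} taken with $q^e$ in place of $q^d$, so that $u = a^k$ and $\alpha - u = b^k$ with $a,b \in \mathbb F_{q^e}^\times$.

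Then $(\alpha-u)I = (bI)^k$. For the other summand, $uI+N = A_0^k$ with
\[
A_0 = a\sum_{j\ge0}\binom{1/k}{j}(N/u)^j .
\]
This series is finite because $N$ is nilpotent, and it makes sense because $\operatorname{char}\nmid k$ makes the coefficients $\binom{1/k}{j}$ well defined. The matrix $A_0$ is upper triangular with eigenvalue $a$. Because $\phi$ is a ring homomorphism, applying it entrywise preserves $k$-th powers. So the images give $J_{f,d} = A_f^k + B_f^k$ in $M_{de}(\mathbb F_q)$, where $A_f$ is invertible and has eigenvalues the Frobenius conjugates $a^{q^i}$, $0\le i<e$. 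Their $k$-th powers are $u^{q^i}$, which are pairwise distinct by the choice of $u$ in Lemma~\ref{lemma:full orbit}. Taking $A$ and $B$ block diagonal then gives $X = A^k + B^k$ with $A$ invertible.

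It remains to check the hypothesis of Lemma~\ref{lemma:Differential Operator Invertible}: whenever two eigenvalues $\lambda \neq \mu$ of $A$ satisfy $\lambda^k = \mu^k$, we have a contradiction. Within a single block this is handled above. Across blocks, the $k$-th powers of the eigenvalues are Galois orbits $\{u_j^{q^i}\}$, and I must choose the $u_j$ so that these orbits are pairwise disjoint. This is the main obstacle. Equal or conjugate $\alpha$'s may occur in several blocks, so the $u_j$ must be chosen sequentially, avoiding the conjugates of the values already chosen.

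For blocks with $e \ge 2$, the proof of Lemma~\ref{lemma:full orbit} gives $|S| > 2q^{e/2}$ minus the bad elements, which is far more than $n$ avoided values. For $e=1$, the bound $|S| \ge (q-4k^{5/2}\sqrt q-2k)/k^2 > n$ is exactly the hypothesis on $n$. In each case, at most $n$ previously forbidden values can be excluded and a good $u_j$ still remains. I would therefore first extract this quantitative lower bound on $|S|$ from the proof of Lemma~\ref{lemma:full orbit}, and then run the greedy choice. Once disjointness holds, Lemma~\ref{lemma:Differential Operator Invertible} applies and shows that $D_A$ is invertible.
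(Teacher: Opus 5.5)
Your proposal is correct and follows essentially the same route as the paper. Both arguments reduce to primary blocks via the regular-representation embedding, pick $u\in S$ by Lemma~\ref{lemma:full orbit}, take the binomial $k$-th root of $uI+N$, choose the $u_j$ greedily with pairwise disjoint Frobenius orbits, and finish with Lemma~\ref{lemma:Differential Operator Invertible}. One step should be stated precisely: for $e\ge 2$ you need $|S|>2q^{e/2}+n$, since you must avoid both the proper-subfield elements and the previously used values; this follows from the explicit bound \eqref{eq:Slower} but not from $|S|>2q^{e/2}$ alone, and your stated plan of extracting that quantitative bound covers it.
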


\begin{proof}
We use the primary decomposition (rational canonical form) of $X$ over $\mathbb F_q$. Every matrix $X\in M_n(\mathbb F_q)$ is similar to a block diagonal matrix
\[
Y = \bigoplus_{i=1}^m J_{f_i,r_i},
\]
where each $f_i\in\mathbb F_q[t]$ is a monic irreducible polynomial of degree $d_i$, and $J_{f_i,r_i}\in M_{r_i d_i}(\mathbb F_q)$ is the block corresponding to the invariant factor $f_i^{r_i}$.

For a single such block $J_{f,r}$, let $\deg f=d$. Let $\alpha\in \mathbb F_{q^d}$ be a root of $f$, and let
\[
\phi:\mathbb F_{q^d}\hookrightarrow M_d(\mathbb F_q)
\]
be the regular representation, extended entrywise to
\[
\phi: M_r(\mathbb F_{q^d})\hookrightarrow M_{rd}(\mathbb F_q).
\]
The block $J_{f,r}$ is exactly $\phi(J_{\alpha,r})$, where $J_{\alpha,r}=\alpha I_r+N_r$ is the standard Jordan block over $\mathbb F_{q^d}$.

We first show that for each such pair $(f,r)$, there exist matrices $\mathcal A_{f,r}$ and $\mathcal B_{f,r}$ in $M_{rd}(\mathbb F_q)$ such that
\[
J_{f,r}=\mathcal A_{f,r}^k+\mathcal B_{f,r}^k,
\]
$\mathcal A_{f,r}$ is invertible, and the eigenvalues of $\mathcal A_{f,r}$ have distinct $k$-th powers within the block. The construction is as follows.

Let $G$ be the set of $k$-th powers in $\mathbb F_{q^d}^\times$, and define
\[
S:=\{u\in G : \alpha-u\in G\}.
\]
By Lemma~\ref{lemma:full orbit}, there exists $u\in S$ such that the Frobenius conjugates
\[
u,\ u^q,\ \dots,\ u^{q^{d-1}}
\]
are pairwise distinct. Choose $\lambda_1\in\mathbb F_{q^d}^\times$ with $\lambda_1^k=u$ and $\lambda_2\in\mathbb F_{q^d}$ with $\lambda_2^k=\alpha-u$. Define
\[
A_r:=\lambda_1\left(I_r+\lambda_1^{-k}N_r\right)^{1/k}
= \lambda_1\sum_{j=0}^{r-1}\binom{1/k}{j}\lambda_1^{-kj}N_r^j
\quad \text{and} \quad
B_r:=\lambda_2 I_r.
\]
Then we have $A_r^k=uI_r+N_r$ and $B_r^k=(\alpha-u)I_r$. Therefore, we get $A_r^k+B_r^k=J_{\alpha,r}$. Set $\mathcal A_{f,r}:=\phi(A_r)$ and $\mathcal B_{f,r}:=\phi(B_r)$. The eigenvalues of $\mathcal A_{f,r}$ are exactly the Frobenius conjugates of $\lambda_1$, and their $k$-th powers are $u,u^q,\dots,u^{q^{d-1}}$, which are distinct. Thus, $D_{\mathcal A_{f,r}}$ is invertible on the subspace corresponding to this block.

We now combine the blocks. Let the irreducible factors be $f_1,\dots,f_m$, with roots $\alpha_i\in\mathbb F_{q^{d_i}}$ and associated sets
\[
S_i:=\{u\in G_i : \alpha_i-u\in G_i\},
\]
where $G_i$ is the set of $k$-th powers in $\mathbb F_{q^{d_i}}^\times$.

We claim that we can choose $u_i\in S_i$ for each $i$ such that the Frobenius orbits
\[
\mathcal T_i:=\{u_i^{q^j}:0\le j<d_i\}
\]
are pairwise disjoint. We proceed by induction. Suppose $u_1,\dots,u_{i-1}$ have been chosen. The union of their Frobenius orbits has at most
\[
\sum_{\ell=1}^{i-1} d_\ell \le n
\]
elements. We now distinguish the following two cases.

\paragraph{\bf Case 1 ($d_i\ge 2$).}
In this case, we have
  \[
  |S_i|>2q^{d_i/2}\ge 2q
  \]
  (see the proof of Lemma~\ref{lemma:full orbit}). From our assumption on $n$, we have
  \[
  n < \frac{q - 4k^{5/2}\sqrt q - 2k}{k^2}
  \le \frac{q}{k^2} \le q < 2q,
  \]
  so $|S_i|>n$. Hence, there exists $u_i\in S_i$ whose orbit avoids the previous union.

\paragraph{\bf Case 2 ($d_i=1$).} In this case, we have $S_i\subset \mathbb F_q$. From the  Weil's bound (see page $10$, \cite{Schmidt76}), we have
  \[
  |S_i| \ge \frac{q-4k^{5/2}\sqrt q - 2k}{k^2}
  \]
  (see the proof of Lemma~\ref{lemma:full orbit}). Let $M:=q-4k^{5/2}\sqrt q - 2k$. The hypothesis on $q$ ensures $M>0$. By our assumption on $n$, we get
  \[
  n < \frac{M}{k^2} \le |S_i|.
  \]
  Thus again we can choose $u_i$ avoiding the previous orbits. Consequently, all Frobenius orbits $\mathcal T_i$ are pairwise disjoint.

For each block $i$, construct $\mathcal A_i:=\phi_i(A_{r_i})$ and $\mathcal B_i:=\phi_i(B_{r_i})$ as above, using the chosen $u_i$. Then we have
\[
J_{f_i,r_i} = \mathcal A_i^k+\mathcal B_i^k.
\]
Set
\[
A:=\bigoplus_{i=1}^m \mathcal A_i \quad \text{and} \quad B:=\bigoplus_{i=1}^m \mathcal B_i.
\]
Then we have $Y=A^k+B^k$. The eigenvalues of $A$ are precisely the union over $i$ of the eigenvalues of $\mathcal A_i$, and their $k$-th powers are the union of the Frobenius orbits of the $u_i$, which are pairwise disjoint. Therefore, for any two distinct eigenvalues $\alpha,\beta$ of $A$, we have $\alpha^k\ne \beta^k$. By Lemma~\ref{lemma:Differential Operator Invertible}, the linear map $D_A$ is invertible.

Finally, since $X$ is similar to $Y$, there exists $P\in\mathrm{GL}_n(\mathbb F_q)$ such that
\[
X=PYP^{-1}=(PAP^{-1})^k+(PBP^{-1})^k.
\]
Conjugating $A$ by $P$ preserves the eigenvalue condition, so $D_{PAP^{-1}}$ is similar to $D_A$ and hence invertible. This completes the proof of Lemma~\ref{lemma:Differential Invertible M_n}.
\end{proof}

The following remark gives explicitly the constant $C_{k,n}$ used in our main theorem.
\begin{remark}\label{remark: C_{k,n}}
Assume that $q>C_k$ and
$$n<\frac{q-4k^{5/2}\sqrt{q}-2k}{k^2}.$$
Since the right-hand side of the above relation is increasing for sufficiently large $q$, there exists a constant $C_{k,n}$, depending only on $k$ and $n$, such that whenever $q>C_{k,n}$, the above inequality holds.
\end{remark}

We are now ready to prove our main theorem.

\begin{proof}[Proof of Theorem~\ref{thm:WaringType M_n(O_l)}]
Note that every matrix in $M_n(\mathcal O_\ell)$ is a sum of two $k$-th powers in $M_n(\mathcal O_\ell)$ for $\ell = 1$ (see \cite[Theorem~1.1]{KiSi25}). We prove the result by induction on $\ell$. Assume that the statement holds for $\ell-1$, where $\ell \ge 2$.

Let $X\in M_n(\mathcal O_{\ell})$, and let $X'$ denote its image in $M_n(\mathcal O_{\ell-1})$ under the natural projection
\[
\mathcal O_{\ell}\to\mathcal O_{\ell-1}=\mathcal O_{\ell}/\langle\pi^{\ell-1}\rangle.
\]
By the induction hypothesis, there exist matrices $A',B'\in M_n(\mathcal O_{\ell-1})$ such that
\[
X' = A'^{\,k}+B'^{\,k}.
\]
 Moreover, we may choose $A'$ so that its reduction modulo $\pi$, denoted by $\bar A_0$, satisfies that $D_{\bar A_0}$ is invertible (see Lemma~\ref{lemma:Differential Invertible M_n}). Since this property depends only on the residue class modulo $\pi$, it is preserved under lifting.

Choose arbitrary lifts $A_0,B_0\in M_n(\mathcal O_{\ell})$ of $A'$ and $B'$, respectively. Then we have
\[
X - (A_0^{k}+B_0^{k}) \in \pi^{\ell-1}M_n(\mathcal O_{\ell}),
\]
and hence
\begin{equation} \label{eq:5.4}
    X = A_0^{k}+B_0^{k} + \pi^{\ell-1}Y
    \qquad\text{for some } Y\in M_n(\mathcal O_{\ell}).
\end{equation}

Now, set
\[
A = A_0 + \pi^{\ell-1}U
\quad \text{and} \quad
B = B_0,
\]
where $U \in M_n(\mathcal O_{\ell})$ is to be determined.
Since $\ell\ge 2$, we have $2(\ell-1)\ge\ell$, and therefore $\langle\pi^{\ell-1}\rangle^{2}=0$ in $\mathcal O_{\ell}$. Expanding the $k$-th power and observing that all terms containing at least two factors of $\pi^{\ell-1}$ vanish, we obtain
\begin{align*}
    A^{k}
        &= (A_0+\pi^{\ell-1}U)^{k} \\
        &= A_0^{k}
           + \pi^{\ell-1}\sum_{i=0}^{k-1}
             A_0^{\,i}UA_0^{\,k-1-i} \\
        &= A_0^{k} + \pi^{\ell-1}D_{A_0}(U),
\end{align*}
where $D_{A_0}(U) = \sum_{i=0}^{k-1} A_0^{\,i}UA_0^{\,k-1-i}$. Consequently, we have
\begin{equation} \label{eq:5.5}
    A^{k}+B^{k}
    = A_0^{k}+B_0^{k}
      + \pi^{\ell-1}D_{A_0}(U).
\end{equation}

Comparing \eqref{eq:5.4} and \eqref{eq:5.5}, it suffices to solve
\begin{equation} \label{eq:5.6}
    D_{A_0}(U)=Y
\end{equation}
in $M_n(\mathcal O_{\ell})$.

To this end, reduce \eqref{eq:5.6} modulo $\pi$. Write $\bar A_0=A_0 \pmod{\pi}$, $\bar U=U \pmod{\pi}$, and $\bar Y=Y \pmod{\pi}$. Therefore, we have
\[
D_{\bar A_0}(\bar U)=\bar Y.
\]
Since $A'$ is invertible, $\bar A_0$ is invertible over $\mathbb F_q$, and by construction $D_{\bar A_0}$ is invertible. Therefore,
\begin{align*}
    D_{A_0}(U)=Y \pmod{\pi}
        &\implies D_{\bar A_0}(\bar U)=\bar Y \\
        &\implies \bar U
            = D_{\bar A_0}^{-1}(\bar Y)
\end{align*}
in $M_n(\mathbb F_q)$.

Finally, choose an arbitray lift $U\in M_n(\mathcal O_\ell)$ of $\bar U$. Then we have
\[
D_{A_0}(U)=Y+\pi Z
\]
for some $Z\in M_n(\mathcal O_\ell)$. Substituting this into \eqref{eq:5.5} yields
\[
A^{k}+B^{k}
    = A_0^{k}+B_0^{k}
      + \pi^{\ell-1}(Y+\pi Z).
\]
Since $\pi^\ell=0$ in $\mathcal O_\ell$, the term $\pi^\ell Z$ vanishes, and hence
\[
A^{k}+B^{k}
    = A_0^{k}+B_0^{k}
      + \pi^{\ell-1}Y
    = X
\]
by \eqref{eq:5.4}. Moreover, $A$ is invertible because $A\equiv A_0\pmod{\pi}$ and $\det(A_0)$ is a unit in $\mathcal O_\ell$ (as $\bar A_0$ is invertible). This completes the inductive step. Therefore, the result holds for all $\ell\in\mathbb N$, completing the proof of Theorem~\ref{thm:WaringType M_n(O_l)}.
\end{proof}

\section{On the necessity of the assumptions in Theorem~\ref{thm:WaringType M_n(O_l)}}\label{sec assumptions are essential}

In this section, we prove results demonstrating that the hypothesis $\Char(\mathbb F_q)\nmid k$ and $-1$ is a $k$-th of an element in $\mathbb F_q$ in Theorem~\ref{thm:WaringType M_n(O_l)} is essential and
cannot be omitted. We start by showing that if $\Char(\mathbb F_q)\nmid k$, then the conclusion of Theorem \ref{thm:WaringType M_n(O_l)} does not hold in general.

\begin{proposition} \label{prop:counter char(F_q)}
Let $k\in \mathbb N$ be fixed, and assume that $\operatorname{char}(\mathbb F_q)\mid k$. Fix a section of the natural projection $\mathcal O_\ell \twoheadrightarrow \mathbb \mathcal{O}_{\ell-1}$,
and identify $\mathcal{O}_{\ell-1}$ with its image in $\mathcal{O}_{\ell}$. Then every matrix
$A\in M_n(\mathcal O_\ell)$ can be written uniquely in the form
\[
A=A_1+\pi^{\ell-1} A_2,
\]
where $A_1,A_2\in M_n(\mathcal{O}_{\ell-1})$. Suppose that $\operatorname{tr}(A_2)\neq 0$. Then $A$ is not a sum of two $k$-th powers in $M_n(\mathcal O_\ell)$.
\end{proposition}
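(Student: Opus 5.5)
The approach is a first-order computation in $\pi^{\ell-1}$, in the spirit of the proof of Theorem~\ref{thm:WaringType M_n(O_l)}, except that now the hypothesis $\operatorname{char}(\mathbb F_q)\mid k$ makes the linear term vanish instead of making it invertible. Write $p=\operatorname{char}(\mathbb F_q)$, so $p\in\mathfrak m$ and $p\pi^{\ell-1}=0$ in $\mathcal O_\ell$. I read the hypothesis $\operatorname{tr}(A_2)\neq 0$ as saying that $\operatorname{tr}(A_2)$ is nonzero modulo $\pi$. This is the only reading that makes the condition depend on $A$ alone, since $\pi^{\ell-1}A_2$ only sees $A_2$ modulo $\pi$.

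\textbf{Step 1: $k$-th powers do not see the top layer.} For $C\in M_n(\mathcal O_\ell)$ and $U\in M_n(\mathcal O_\ell)$, expanding exactly as in the proof of Theorem~\ref{thm:WaringType M_n(O_l)} (using $\langle\pi^{\ell-1}\rangle^2=0$ for $\ell\ge2$) gives
\[
(C+\pi^{\ell-1}U)^k=C^k+\pi^{\ell-1}D_C(U).
\]
Modulo $\pi$, the trace of $D_C(U)$ equals $\operatorname{tr}\bigl(\sum_i C^iUC^{k-1-i}\bigr)=k\,\operatorname{tr}(C^{k-1}U)$, and this vanishes because $p\mid k$. I will not need $D_C(U)$ itself to vanish, only its trace modulo $\pi$. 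Consequently the class of $\operatorname{tr}(C^k)$ in $\mathcal O_\ell$ depends only on $C$ modulo $\pi^{\ell-1}$. The same then holds for $\operatorname{tr}(C^k+D^k)$ as a function of $(C,D)$.

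\textbf{Step 2: compare with the section.} Write $C=C_1+\pi^{\ell-1}C_2$ and $D=D_1+\pi^{\ell-1}D_2$ with $C_i,D_i$ in the image of the fixed section. By Step 1, $\operatorname{tr}(C^k+D^k)=\operatorname{tr}(C_1^k+D_1^k)$. Now suppose $A=C^k+D^k$. Decomposing both sides by the section, the $\mathcal O_{\ell-1}$-parts agree, so $A_1$ is the reduction of $C_1^k+D_1^k$. Taking traces, $\pi^{\ell-1}\operatorname{tr}(A_2)$ equals the ``carry'' term, namely the $\pi^{\ell-1}$-component of $\operatorname{tr}(C_1^k+D_1^k)$ relative to the section.

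\textbf{Step 3: kill the carry term.} I must show this top trace component vanishes modulo $\pi$, which contradicts $\operatorname{tr}(A_2)\neq 0$. In the equal-characteristic case $\mathcal O_\ell=\mathbb F_q[t]/\langle t^\ell\rangle$ with the obvious section, I would use the char-$p$ identity $\operatorname{tr}(X^p)=\operatorname{tr}(X)^p$. It follows from expanding $(\sum x_{ij}E_{ij})^p$ and grouping the non-constant words into cyclic orbits of size $p$, each of which contributes $p\cdot(\text{trace})=0$. Writing $k=pm$, this shows $\operatorname{tr}(C^k)$ is a $p$-th power and hence lies in $\mathbb F_q[t^p]$. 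In the mixed-characteristic case I would instead combine Step 1 with an induction on $\ell$ to control $\operatorname{tr}(C_1^k)$.

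\textbf{Main obstacle.} Step 3 is where I expect real difficulty. The section is not multiplicative, and in the equal-characteristic case a $p$-th power can have a nonzero $t^{\ell-1}$ coefficient when $p\mid \ell-1$. So the top component need not vanish for an arbitrary section and $\ell$. I would first try to see whether the intended statement implicitly requires $A_1$ to be a sum of $k$-th powers of section elements with vanishing carry (for instance $A_1=0$, so that $A=\pi^{\ell-1}A_2$). In that case the argument reduces to Step 1 applied to $C\equiv$ a matrix whose $k$-th power has vanishing trace layer, and I would restrict the claim accordingly if needed.
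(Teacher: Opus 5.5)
Your Steps 1 and 2 are the paper's argument: expand to first order in $\pi^{\ell-1}$ and use $\operatorname{tr}D_C(U)=k\operatorname{tr}(C^{k-1}U)\in\mathfrak m$. The gap you flag in Step 3 is real, and it cannot be closed, because the proposition is false as stated. The paper's proof gets past it by ``equating the coefficients'' of $\pi^{\ell-1}$. This tacitly assumes $X_0^k+Y_0^k=A_1$ exactly in $M_n(\mathcal O_\ell)$, i.e.\ that your carry term is zero. That is justified when the section is a ring homomorphism, which is possible only for $\ell=2$ in equal characteristic, but not in general. Concretely:
\begin{itemize}
\item In $\mathbb Z/4\mathbb Z$ with $k=2$ and section $\{0,1\}$, the matrix $2E_{11}$ has $A_1=0$, $A_2=E_{11}$ and $\operatorname{tr}(A_2)=1$, yet $2E_{11}=E_{11}^2+E_{11}^2$.
\item In $\mathbb Z/9\mathbb Z$ with $k=3$ and section $\{0,1,2\}$, the element $8=2+3\cdot 2$ has $\operatorname{tr}(A_2)=2\neq 0$, yet $8=2^3+0^3$.
\item In $\mathbb F_q[t]/\langle t^{p+1}\rangle$ with $k=p$ and the standard section, $t^pE_{11}$ has $A_1=0$ and $A_2=E_{11}$, yet $t^pE_{11}=(tE_{11})^p+0^p$.
\end{itemize}
Two of these have $A_1=0$. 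So your proposed fallback restriction to $A=\pi^{\ell-1}A_2$ does not rescue the claim, and no induction on $\ell$ will settle the mixed-characteristic case.

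What does survive is your equal-characteristic Step 3. Take $\mathcal O_\ell=\mathbb F_q[t]/\langle t^\ell\rangle$ with the standard (additive) section. The $t^{\ell-1}$-coefficient of $\operatorname{tr}(A)$ is $\operatorname{tr}(A_2)\bmod t$. On the other hand, $\operatorname{tr}(C^k+D^k)=\operatorname{tr}(C^{k/p})^p+\operatorname{tr}(D^{k/p})^p$ involves only powers $t^{pj}$. So when $p\nmid \ell-1$ your argument is a complete proof, and it does not even need Steps 1 and 2. Iterating $\operatorname{tr}(X^p)=\operatorname{tr}(X)^p$ shows that $p^{v_p(k)}\nmid\ell-1$ already suffices. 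The example $t^pE_{11}$ shows that some condition of this kind is needed.

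Your reading of $\operatorname{tr}(A_2)\neq0$ as ``nonzero modulo $\pi$'' is the right one. Note also that the claimed uniqueness of $A_2$ fails for $\ell\ge 3$, since $\pi^{\ell-1}A_2$ depends only on $A_2\bmod\pi$.
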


\begin{proof}
If possible, on the contrary, assume that there exist $X, Y \in M_n(\mathcal O_\ell)$ such that
\begin{equation} \label{eq: char 1}
    A=A_1+\pi^{\ell-1} A_2 = X^k+Y^k.
\end{equation}
Write $X=X_0+\pi^{\ell-1} X_1$ and $Y=Y_0+\pi^{\ell-1} Y_1$, where $X_0, X_1, Y_0, Y_1 \in M_n(\mathcal{O}_{\ell-1})$. Then
\begin{equation}  \label{eq: char 2}
    X^k+Y^k= (X_0^k+Y_0^k) + \pi^{\ell-1} \left(\sum_{i=0}^{k-1}
             X_0^{\,i}X_1X_0^{\,k-1-i} + \sum_{i=0}^{k-1}
             Y_0^{\,i}Y_1Y_0^{\,k-1-i} \right).
\end{equation}
Equating the coefficients of $\pi$ in equations \eqref{eq: char 1} and \eqref{eq: char 2}, we obtain
\begin{equation} \label{eq: char 3}
    \sum_{i=0}^{k-1}
             X_0^{\,i}X_1X_0^{\,k-1-i} + \sum_{i=0}^{k-1}
             Y_0^{\,i}Y_1Y_0^{\,k-1-i}=A_2.
\end{equation}
Next, we observe that
\begin{align*}
    \operatorname{tr} \left (\sum_{i=0}^{k-1}
             X_0^{\,i}X_1X_0^{\,k-1-i} \right)
    &= \operatorname{tr} \left (\sum_{i=0}^{k-1}
             X_1X_0^{\,k-1} \right) \\
    &= k\,\operatorname{tr}(X_1X_0^{\,k-1}) \\
    &= 0,
\end{align*}
since $\operatorname{char}(\mathbb F_q)\mid k$. Similarly,
\[
\operatorname{tr} \left (\sum_{i=0}^{k-1}
             Y_0^{\,i}Y_1Y_0^{\,k-1-i} \right)=0.
\]
Therefore, taking traces in equation \eqref{eq: char 3}, we obtain
$\operatorname{tr}(A_2)=0$, contradicting the assumption that
$\operatorname{tr}(A_2)\neq 0$.

Hence, our assumption that there exist $X, Y \in M_n(\mathcal O_\ell)$ satisfying $A=X^k+Y^k$ is false. Consequently, the matrix $A=A_1+\pi A_2 \in M_n(\mathcal O_\ell)$ cannot be expressed as a sum of two $k$-th powers in $M_n(\mathcal O_\ell)$. This completes the proof of Proposition~\ref{prop:counter char(F_q)}.
\end{proof}

We now proceed to show that the assumption $-1$ is a $k$-th power in $\mathbb F_q$ is essential in Theorem~\ref{thm:WaringType M_n(O_l)} and cannot be omitted in general. For that, we first establish the following lemma which gives a characterization of matrices in $M_n(\mathbb{F}_q)$ whose $k$-th power is $-I$.

\begin{lemma}\label{lemma:power}
There exists a matrix $X \in M_n(\mathbb{F}_q)$ satisfying $X^k=-I$ if and only if either $q$ is even or $q$ is odd with $v_2(k)<v_2(q^n-1)$, where $v_2(m)$ denotes the exponent of $2$ in the prime factorisation of $m$.
\end{lemma}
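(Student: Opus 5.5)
The plan is to split according to the parity of $q$. Then, for odd $q$, I will reduce the existence of $X$ to the existence of $k$-th roots of $-1$ in the extension fields $\mathbb F_{q^d}$, and control these by $2$-adic valuations.

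\textbf{Even $q$.} Here $-1=1$, so $X=I$ works and there is nothing to prove.

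\textbf{Odd $q$, the basic criterion.} Since $\mathbb F_{q^d}^\times$ is cyclic of order $q^d-1$, the element $-1$ is the unique element of order $2$ in it. It is a $k$-th power exactly when $(q^d-1)/\gcd(k,q^d-1)$ is even, that is, when $v_2(k)<v_2(q^d-1)$. So there is $\lambda\in\mathbb F_{q^d}$ with $\lambda^k=-1$ if and only if $v_2(k)<v_2(q^d-1)$.

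\textbf{Odd $q$, sufficiency.} Assume $v_2(k)<v_2(q^n-1)$ and pick $\lambda\in\mathbb F_{q^n}$ with $\lambda^k=-1$. Let $\phi:\mathbb F_{q^n}\hookrightarrow M_n(\mathbb F_q)$ be the regular representation recalled before Lemma~\ref{lemma:Differential Invertible M_n}. Since $\phi$ is a ring homomorphism, $X=\phi(\lambda)$ satisfies $X^k=\phi(-1)=-I$.

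\textbf{Odd $q$, necessity.} Suppose $X^k=-I$. Then $X$ is annihilated by $t^k+1$, so each eigenvalue of $X$ in $\overline{\mathbb F}_q$ is a root of $t^k+1$. Take the rational canonical form of $X$, with companion blocks attached to irreducible factors $f_i$ of degree $d_i$, where $\sum_i d_i\cdot(\text{multiplicity})=n$. A root of $f_i$ lies in $\mathbb F_{q^{d_i}}$ and is a $k$-th root of $-1$, so by the criterion $v_2(k)<v_2(q^{d_i}-1)$ for every $i$.

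It therefore suffices to find one $i$ with $v_2(q^{d_i}-1)\le v_2(q^n-1)$. For this I will use the lifting-the-exponent formula for odd $q$:
\[
v_2(q^d-1)=\begin{cases} v_2(q-1), & d \text{ odd},\\ v_2(q^2-1)+v_2(d)-1, & d\text{ even}.\end{cases}
\]
This quantity is nondecreasing in $v_2(d)$, and it is never smaller than $v_2(q-1)$. Now choose $i$ with $v_2(d_i)$ minimal. If $d_i$ is odd, then $v_2(q^{d_i}-1)=v_2(q-1)\le v_2(q^n-1)$. If all $d_i$ are even, then $n$ is a sum of multiples of $2^{v_2(d_i)}$, so $v_2(d_i)\le v_2(n)$; monotonicity then gives $v_2(q^{d_i}-1)\le v_2(q^n-1)$. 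In both cases $v_2(k)<v_2(q^n-1)$.

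The main obstacle is this last step: the degrees $d_i$ need not divide $n$, so one has to argue through the minimal $2$-adic valuation of the block degrees together with the monotonicity in the lifting-the-exponent formula. The remaining steps are routine.
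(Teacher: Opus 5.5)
Your proof is correct, and in the necessity direction it takes a different route from the paper's, one that is also more careful. The paper takes an arbitrary eigenvalue $\lambda$ of $X$ and asserts that $\lambda\in\mathbb{F}_{q^n}$ ``since the characteristic polynomial of $X$ has degree $n$''. It then applies the cyclic-group criterion in $\mathbb{F}_{q^n}^{\times}$.

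That assertion is false in general. The eigenvalue $\lambda$ lies in $\mathbb{F}_{q^d}$, where $d$ is the degree of its minimal polynomial, and $d\le n$ need not divide $n$. For instance, if $n=10$ and the irreducible factors of the characteristic polynomial have degrees $4$ and $6$, then no eigenvalue lies in $\mathbb{F}_{q^{10}}$. So the paper's argument cannot be saved just by choosing the eigenvalue more carefully.

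Your argument closes this gap:
\begin{itemize}
\item apply the criterion separately in each $\mathbb{F}_{q^{d_i}}$;
\item pick a block degree $d_i$ of minimal $2$-adic valuation, and observe that $v_2(n)\ge\min_i v_2(d_i)$;
\item use that $v_2(q^d-1)$ depends only on $v_2(d)$ and is nondecreasing in it.
\end{itemize}
In effect you show that only the $2$-part of $|\mathbb{F}_{q^d}^{\times}|$ matters, which is the right invariant here. With it, the necessity direction becomes rigorous.

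For sufficiency the two routes are essentially equivalent. The paper takes $n/d$ diagonal copies of the companion matrix of the minimal polynomial of $\lambda$, and verifies $M^k=-I$ through diagonalizability over $\mathbb{F}_{q^d}$. Your matrix $\phi(\lambda)$ under the regular representation is conjugate to that block matrix. It gives $X^k=\phi(-1)=-I$ at once, with no case split on whether $\lambda\in\mathbb{F}_q$ and no diagonalizability step.
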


\begin{proof}
If $q$ is even, then $-I=I$. Hence, $X=I$ satisfies $X^k=I=-I$ for every $k$.

Now assume that $q$ is odd and let $N=q^n-1=2^t m$ with $m$ odd. Then we have $t=v_2(N)$.
Suppose that $X\in M_n(\mathbb{F}_q)$ satisfies $X^k=-I$. Let $\lambda$ be an eigenvalue of $X$ in $\overline{\mathbb{F}}_q$. Then $\lambda^k=-1$. Since the characteristic polynomial of $X$ has degree $n$, we have $\lambda\in\mathbb{F}_{q^n}$. Thus, $\lambda$ belongs to the cyclic group $\mathbb{F}_{q^n}^{\times}$ of order $N$ and satisfies $\lambda^k=-1$. Therefore, $-1$ is a $k$-th power in $\mathbb{F}_{q^n}^{\times}$.

Since $-1$ has order $2$, it is a $k$-th power in a cyclic group of order $N$ if and only if $2\mid \frac{N}{\gcd(N,k)}$. Equivalently,
$
v_2(N)>v_2(\gcd(N,k)).
$
Since $v_2(\gcd(N,k))=\min\{t,v_2(k)\}$, this is equivalent to $v_2(k)<t=v_2(q^n-1)$.

Conversely, assume that $v_2(k)<t$. Then $-1$ is a $k$-th power in $\mathbb{F}_{q^n}^{\times}$, so there exists $\lambda\in\mathbb{F}_{q^n}^{\times}$ such that $\lambda^k=-1$. Note that if $\lambda\in\mathbb{F}_q$, then $X=\lambda I$ satisfies $X^k=-I$. Further, suppose that $\lambda\notin\mathbb{F}_q$. Let $d=[\mathbb{F}_q(\lambda):\mathbb{F}_q]$. Since $\mathbb{F}_q(\lambda)$ is a subfield of $\mathbb{F}_{q^n}$, we have $d\mid n$ and $d>1$. The Galois conjugates of $\lambda$ are $\lambda, \lambda^q, \dots, \lambda^{q^{d-1}}$. Let $M$ be the companion matrix of the minimal polynomial of $\lambda$ over $\mathbb{F}_q$. Then $M\in M_d(\mathbb{F}_q)$ has eigenvalues $\lambda, \lambda^q, \dots, \lambda^{q^{d-1}}$. Hence, the eigenvalues of $M^k$ are $\lambda^k=-1$, $(\lambda^q)^k=(\lambda^k)^q=-1$, $\dots$, $(\lambda^{q^{d-1}})^k=(\lambda^k)^{q^{d-1}}=-1$, because $q$ is odd. Since $\mathbb{F}_q$ is perfect, the minimal polynomial of $\lambda$ has distinct roots, so $M$ is diagonalizable over $\mathbb{F}_{q^d}$, and therefore $M^k=-I$. Let $X$ be the block diagonal matrix consisting of $\frac{n}{d}$ copies of $M$. Then $X\in M_n(\mathbb{F}_q)$ and $X^k=-I$.

Thus, there exists $X\in M_n(\mathbb{F}_q)$ satisfying $X^k=-I$ if and only if $q$ is even or $v_2(k)<v_2(q^n-1)$ when $q$ is odd. This completes the proof of Lemma~\ref{lemma:power}.
\end{proof}

The following lemma, which is a consequence of Lemma~\ref{lemma:power}, provides a key ingredient in the proof of Proposition~\ref{prop:counter kth root}.
\begin{lemma}\label{lemma:nilpotent}
Assume that $-1$ is not a $k$-th power in $\mathbb{F}_q$ and that $v_2(k)\ge v_2(q^2-1)$, where $v_2(m)$ denotes the exponent of $2$ in the prime factorization of $m$. Let $X,Y\in M_2(\mathbb{F}_q)$. Then
\[
X^k+Y^k=O
\]
if and only if both $X$ and $Y$ are nilpotent.
\end{lemma}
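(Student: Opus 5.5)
The plan is to reduce the statement to an eigenvalue computation over $\mathbb F_{q^2}$, where the hypothesis $v_2(k)\ge v_2(q^2-1)$ is exactly the obstruction isolated in the proof of Lemma~\ref{lemma:power} (applied with $n=2$).

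\textbf{Preliminary remarks.} First, $q$ must be odd: if $q$ were even then $-1=1=1^k$ would be a $k$-th power in $\mathbb F_q$, contrary to the hypothesis. Hence $8\mid q^2-1$, so $v_2(q^2-1)\ge 3$ and therefore $v_2(k)\ge 3$; in particular $k\ge 8\ge 2$.

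\textbf{The easy direction.} Suppose $X$ and $Y$ are nilpotent. A nilpotent $2\times 2$ matrix satisfies $X^2=O$ by Cayley--Hamilton. Since $k\ge 2$, we get $X^k=Y^k=O$, and so $X^k+Y^k=O$.

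\textbf{The main direction.} Suppose $X^k+Y^k=O$, i.e.\ $Y^k=-X^k$. Let $\lambda_1,\lambda_2$ and $\mu_1,\mu_2$ be the eigenvalues of $X$ and $Y$ in $\overline{\mathbb F}_q$. Since their characteristic polynomials have degree $2$, all of these eigenvalues lie in $\mathbb F_{q^2}$. The eigenvalues of $X^k$ are $\lambda_1^k,\lambda_2^k$, and those of $Y^k$ are $\mu_1^k,\mu_2^k$. The identity $Y^k=-X^k$ then gives an equality of multisets
\[
\{\mu_1^k,\mu_2^k\}=\{-\lambda_1^k,-\lambda_2^k\}.
\]
Suppose $X$ is not nilpotent, so some $\lambda_i\neq 0$. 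Then $-\lambda_i^k=\mu_j^k$ for some $j$, and necessarily $\mu_j\ne 0$. Hence
\[
-1=(\mu_j/\lambda_i)^k \quad\text{with } \mu_j/\lambda_i\in\mathbb F_{q^2}^\times ,
\]
so $-1$ is a $k$-th power in the cyclic group $\mathbb F_{q^2}^\times$ of order $q^2-1$. The argument in the proof of Lemma~\ref{lemma:power} shows that this happens if and only if $v_2(k)<v_2(q^2-1)$. This contradicts our hypothesis, so $X$ is nilpotent. By the symmetric argument (swap the roles of $X$ and $Y$), $Y$ is nilpotent as well.

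\textbf{Main obstacle.} The only delicate point is justifying that eigenvalues of $k$-th powers match up as multisets and that the relevant ratio stays in $\mathbb F_{q^2}$. The first follows by triangularizing over $\overline{\mathbb F}_q$: the eigenvalues of $X^k$ are the $k$-th powers of those of $X$. The second holds because all eigenvalues of $2\times 2$ matrices over $\mathbb F_q$ lie in $\mathbb F_{q^2}$. Alternatively, one could invoke Lemma~\ref{lemma:power} directly when $X$ is invertible with $X^k$ scalar, but the eigenvalue argument handles all cases uniformly. That includes the cases where $X$ has one zero and one nonzero eigenvalue, or where $X^k$ is not scalar.
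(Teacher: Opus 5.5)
Your proof is correct and is essentially the paper's eigenvalue argument: a nonzero eigenvalue of $X$ forces a ratio $\mu/\lambda\in\mathbb{F}_{q^2}^\times$ with $(\mu/\lambda)^k=-1$, which contradicts $v_2(k)\ge v_2(q^2-1)$. The paper splits into two cases, an invertible case where it builds a companion matrix $Z$ with $Z^k=-I$ to invoke Lemma~\ref{lemma:power}, and a singular case where it uses that $-1$ is not a $k$-th power in $\mathbb{F}_q$; you instead apply the cyclic-group criterion in $\mathbb{F}_{q^2}^\times$ uniformly, and you also make explicit the multiset matching of eigenvalues and the fact $k\ge 2$, both of which the paper leaves implicit.
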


\begin{proof}
Suppose that $X^k+Y^k=O$. We first show that neither $X$ nor $Y$ is invertible. Since $Y^k=-X^k$, $X$ is invertible if and only if $Y$ is invertible. Let $\lambda \neq 0$ be an eigenvalue of $X$, and let $\mu \neq 0$ be the corresponding eigenvalue of $Y$. Since $Y^k=-X^k$, the eigenvalues satisfy
\[
\mu^k=-\lambda^k \implies \left(\frac{\mu}{\lambda}\right)^k=-1
\]
in $\mathbb{F}_{q^2}$. Put $a=\frac{\mu}{\lambda}\in\mathbb{F}_{q^2}^*$. If $a\in\mathbb{F}_q$, then the scalar matrix $aI$ satisfies $(aI)^k=-I$. If $a\notin\mathbb{F}_q$, its minimal polynomial over $\mathbb{F}_q$ is quadratic, and its companion matrix $Z\in M_2(\mathbb{F}_q)$ has eigenvalues $a$ and $a^q$. Since $a^k=-1$ and $(a^q)^k=(-1)^q=-1$ (as $q$ is odd), the polynomial $t^k+1$ vanishes on both eigenvalues of $Z$. Hence, $Z^k=-I$ by Cayley--Hamilton theorem. Thus, in both the cases, there exists $Z\in M_2(\mathbb{F}_q)$ with $Z^k=-I$, contradicting Lemma~\ref{lemma:power}, because $v_2(k)\ge v_2(q^2-1)$. Therefore, $X$ and $Y$ are not invertible.

Since $X$ and $Y$ are singular $2\times2$ matrices over $\mathbb{F}_q$, all of their eigenvalues belong to $\mathbb{F}_q$. Now, assume that $\lambda$ and $\mu$ are nonzero eigenvalues of $X$ and $Y$, respectively. From the relation $X^k+Y^k=O$, we obtain
\[
\lambda^k+\mu^k=0,
\]
and therefore
\[
\left(\frac{\lambda}{\mu}\right)^k=-1
\]
in $\mathbb{F}_q$. Since $-1$ is not a $k$-th power in $\mathbb{F}_q$, this is impossible. Hence, we get $\lambda=\mu=0$. As $X$ and $Y$ are singular, it follows that all eigenvalues of both matrices are zero. Therefore, $X$ and $Y$ are nilpotent.

The converse is immediate, since the $k$-th power of a nilpotent $2\times2$ matrix is zero for $k\ge 2$. Thus, we have $X^k+Y^k=O$ whenever both $X$ and $Y$ are nilpotent. This completes the proof of Lemma~\ref{lemma:nilpotent}.
\end{proof}

The following proposition establishes that the assumption that $-1$ is a $k$-th in $\mathbb F_q$ is essential in Theorem~\ref{thm:WaringType M_n(O_l)} and cannot be omitted in general.

\begin{proposition} \label{prop:counter kth root}
Let $k$ be a positive integer such that $-1$ is not a $k$-th power in $\mathbb F_q$ and $v_2(k)\ge v_2(q^2-1)$, where $v_2(m)$ denotes the exponent of $2$ in the prime factorization of $m$. Fix a section of the natural projection $\mathcal O_2\twoheadrightarrow \mathbb F_q$ and identify $\mathbb F_q$ with its image in $\mathcal O_2$. Then, for every nonzero matrix $A\in M_2(\mathbb F_q)$, the matrix $\pi A\in M_2(\mathcal O_2)$ cannot be expressed as a sum of two $k$-th powers in $M_2(\mathcal O_2)$.
\end{proposition}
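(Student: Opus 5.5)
Suppose, for contradiction, that $\pi A = X^k + Y^k$ with $X, Y \in M_2(\mathcal O_2)$. Write $X = X_0 + \pi X_1$ and $Y = Y_0 + \pi Y_1$ with $X_0, X_1, Y_0, Y_1 \in M_2(\mathbb F_q)$, using the fixed section. Since $\pi^2 = 0$ in $\mathcal O_2$, the same expansion as in the proof of Theorem~\ref{thm:WaringType M_n(O_l)} gives
\[
X^k + Y^k = (X_0^k + Y_0^k) + \pi\bigl(D_{X_0}(X_1) + D_{Y_0}(Y_1)\bigr),
\]
where $D$ is the operator of Lemma~\ref{lemma:Differential Operator Invertible}. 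This operator is defined for any matrix, not only invertible ones, and the expansion is purely formal. Comparing the constant and $\pi$-parts, we get two equations:
\[
X_0^k + Y_0^k = O, \qquad D_{X_0}(X_1) + D_{Y_0}(Y_1) = A.
\]

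The hypotheses here are exactly those of Lemma~\ref{lemma:nilpotent}. Applying it to the first equation, $X_0$ and $Y_0$ are both nilpotent in $M_2(\mathbb F_q)$, so $X_0^2 = Y_0^2 = O$.

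It remains to show that $D_N(U) = O$ for every nilpotent $N \in M_2(\mathbb F_q)$ and every $U$. Then the second equation forces $A = O$, contradicting $A \neq O$.

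First, $k \ge 2$. Indeed, $v_2(q^2-1) \ge 3$ when $q$ is odd, so $v_2(k) \ge 3$. When $q$ is even, $-1 = 1$ is always a $k$-th power, so that case is excluded by hypothesis. In fact $8 \mid k$, so $k \ge 8$.

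Now expand
\[
D_N(U) = \sum_{i=0}^{k-1} N^i U N^{k-1-i}.
\]
A term is nonzero only if both $i \le 1$ and $k-1-i \le 1$, i.e.\ $i \le 1$ and $i \ge k-2$. With $k \ge 8$ no index $i$ satisfies both, so every term vanishes and $D_N = 0$. (Even for $k = 3$ only the term $NUN$ would survive, so $k \ge 4$ is what is really needed; the hypotheses give this comfortably.)

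Hence $A = D_{X_0}(X_1) + D_{Y_0}(Y_1) = O$, a contradiction.

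The only delicate step is checking that the hypotheses force $k$ large enough for $D_N$ to vanish identically. This comes from $v_2(q^2-1) \ge 3$ for odd $q$ together with the exclusion of even $q$. Everything else is the expansion modulo $\pi^2$ combined with Lemma~\ref{lemma:nilpotent}.
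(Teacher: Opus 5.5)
Your proof is correct and follows the paper's argument essentially step for step: expansion modulo $\pi^2$, Lemma~\ref{lemma:nilpotent} applied to the reduction, and $k\ge 8$ killing every term of $D_{X_0}(X_1)$ and $D_{Y_0}(Y_1)$. Your observation that $q$ must be odd, so $8\mid q^2-1$, supplies the justification for $k\ge 8$ that the paper only asserts.

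One minor point, shared with the paper: when the section is not multiplicative (e.g.\ $\mathcal O_2=\mathbb Z/p^2\mathbb Z$), the matrix $X_0^k+Y_0^k$ computed in $M_2(\mathcal O_2)$ could a priori have a nonzero $\pi$-part that your comparison of parts omits. Here that part vanishes, since $X_0^2,Y_0^2\in\pi M_2(\mathcal O_2)$ and $\pi^2=0$ give $X_0^4=Y_0^4=O$.
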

\begin{proof}
On contrary, assume that there exist $X, Y \in M_2(\mathcal O_2)$ such that
\begin{equation} \label{eq: kth root 1}
    \pi A = X^k+Y^k.
\end{equation}
Write $X=X_0+\pi X_1$ and $Y=Y_0+\pi Y_1$, where $X_0, X_1, Y_0, Y_1 \in M_2(\mathbb F_q)$. Then
\begin{equation}  \label{eq: kth root 2}
    X^k+Y^k= (X_0^k+Y_0^k) + \pi \left(\sum_{i=0}^{k-1}
             X_0^{\,i}X_1X_0^{\,k-1-i} + \sum_{i=0}^{k-1}
             Y_0^{\,i}Y_1Y_0^{\,k-1-i} \right).
\end{equation}
Comparing equations \eqref{eq: kth root 1} and \eqref{eq: kth root 2}, we obtain
\[
X_0^k+Y_0^k=O
\]
and
\[
\sum_{i=0}^{k-1}
             X_0^{\,i}X_1X_0^{\,k-1-i} + \sum_{i=0}^{k-1}
             Y_0^{\,i}Y_1Y_0^{\,k-1-i}=A.
\]
By Lemma~\ref{lemma:nilpotent}, the relation $X_0^k+Y_0^k=O$ implies that both $X_0$ and $Y_0$ are nilpotent, since $v_2(k)\ge v_2(q^2-1)$. Moreover, the condition $v_2(k)\ge v_2(q^2-1)$ implies that $k \geq 8$. Therefore, for any $i \in \{0, 1, \dots k-1\}$, $\max (i, k-1-i) \geq 2$. Hence,
\[
\sum_{i=0}^{k-1}
             X_0^{\,i}X_1X_0^{\,k-1-i} + \sum_{i=0}^{k-1}
             Y_0^{\,i}Y_1Y_0^{\,k-1-i}=O,
\]
which contradicts the fact that $A$ is a nonzero matrix. Therefore, our assumption that there exist $X, Y \in M_2(\mathcal O_2)$ satisfying $\pi A = X^k+Y^k$ is false. Consequently, the matrix $\pi A \in M_2(\mathcal O_2)$ cannot be expressed as a sum of two $k$-th powers in $M_2(\mathcal O_2)$. This completes the proof of Proposition~\ref{prop:counter kth root}.     
\end{proof}

For a concrete illustration, we present the following example. It demonstrates that the assumption that $-1$ is a $k$-th power in $\mathbb F_q$, appearing in Theorem~\ref{thm:WaringType M_n(O_l)}, is essential and therefore cannot be omitted in general.

\begin{example}\label{example}
For $k=8$ and $n=2$, we have
\[
C_8=\max\left(100\cdot 8^3,\;
\left(2\cdot 8^{5/2}+\sqrt{4\cdot 8^5+8^2+2\cdot 8}\right)^2\right).
\]
Since
\[
100\cdot 8^3=51200,
\]
and
\[
\left(2\cdot 8^{5/2}+\sqrt{4\cdot 8^5+8^2+2\cdot 8}\right)^2
\approx 524447.988,
\]
it follows that
\[
C_8\approx 524447.988.
\]

Suppose that $\mathbb F_q$ has characteristic $3$. Then $q=3^m$ for some positive integer $m$. Since
\[
3^{11}=177147<524447.988<531441=3^{12},
\]
the smallest power of $3$ exceeding $C_8$ is $3^{12}$. Furthermore, we have
\[
2<
\frac{531441-4\cdot 8^{5/2}\sqrt{531441}-16}{64},
\]
so we may take
\[
C_{8,2}=3^{12}=531441.
\]
Hence, Theorem~\ref{thm:WaringType M_n(O_l)} applies whenever $q>531441$.
Now, let $\mathcal O_2/\mathfrak m \cong \mathbb F_{3^{13}}$, and consider the matrix
\[
X=\begin{pmatrix}
\pi & 0\\
0 & 0
\end{pmatrix}\in M_2(\mathcal O_2).
\]
Note that $-1$ is not an $8$-th power in $\mathbb{F}_{3^{13}}$. Since $3^{26}\equiv 9 \pmod{16}$,
it follows from Proposition~\ref{prop:counter kth root} that there do not exist matrices
$A,B\in M_2(\mathcal O_2)$ such that
\[
X=A^8+B^8.
\]
This provides the desired counterexample.
\end{example}










\bibliographystyle{amsalpha}
\bibliography{Reference}
\end{document}